\documentclass[a4paper,leqno]{amsart}

\usepackage{latexsym}
\usepackage[english]{babel}
\usepackage{fancyhdr}
\usepackage[mathscr]{eucal}
\usepackage{amsmath}
\usepackage{mathrsfs}
\usepackage{amsthm}
\usepackage{amsfonts}
\usepackage{amssymb}
\usepackage{amscd}
\usepackage{bbm}
\usepackage{graphicx}
\usepackage{subcaption}
\usepackage{graphics}
\usepackage{latexsym}
\usepackage{color}

\usepackage{pifont}
\usepackage{booktabs} 

\newcommand{\ud}{\mathrm{d}}

\newcommand{\ii}{\mathrm{i}}
\newcommand{\cH}{\mathcal{H}}

\newcommand{\C}{\mathbb C}

\theoremstyle{plain}
\newtheorem{theorem}{Theorem}[section]
\newtheorem{lemma}[theorem]{Lemma}

\newtheorem{proposition}[theorem]{Proposition}

\theoremstyle{definition}

\newtheorem{remark}[theorem]{Remark}
\newtheorem{example}[theorem]{Example}

\numberwithin{equation}{section}

\begin{document}

\title[Krylov solvability under perturbations]
{Krylov solvability under perturbations of abstract inverse linear problems}

\author[No\`e Angelo Caruso]{No\`e Angelo Caruso}

\address[N.~A.~Caruso]{%
Gran Sasso Science Institute\\
Viale F.~Crispi 7, I-67100 L'Aquila (ITALY)}

\email{noe.caruso@gssi.it}

\author[Alessandro Michelangeli]{Alessandro Michelangeli}
\address[A.~Michelangeli]{Institute for Applied Mathematics, and Hausdorff Center of Mathematics, University of Bonn \\ Endenicher Allee 60 \\ 
D-53115 Bonn (GERMANY).}
\email{michelangeli@iam.uni-bonn.de}


\begin{abstract}
 When a solution to an abstract inverse linear problem on Hilbert space is approximable by finite linear combinations of vectors from the cyclic subspace associated with the datum and with the linear operator of the problem, the solution  is said to be a Krylov solution, i.e., it belongs to the Krylov subspace of the problem. Krylov solvability of the inverse problem allows for solution approximations that, in applications, correspond to the very efficient and popular Krylov subspace methods. We study here the possible behaviours of persistence, gain, or loss of Krylov solvability under suitable small perturbations of the inverse problem -- the underlying motivations being the stability or instability of Krylov methods under small noise or uncertainties, as well as the possibility to decide a priori whether an inverse problem is Krylov solvable by investigating a potentially easier, perturbed problem. We present a whole scenario of occurrences in the first part of the work. In the second, we exploit the weak gap metric induced, in the sense of Hausdorff distance, by the Hilbert weak topology, in order to conveniently monitor the distance between perturbed and unperturbed Krylov subspaces.
\end{abstract}

\date{\today}

\subjclass[2020]{}
\keywords{Inverse linear problems, infinite-dimensional Hilbert space, Krylov subspaces, Krylov solvability, cyclic operators, cyclic vectors, spectral theory, Hausdorff distance, subspace perturbations, weak topology, weak convergence
}

\thanks{This work is partially supported by the Alexander von Humboldt Foundation.}

\maketitle


\section{Introduction}\label{intro}

The ubiquitous occurrence of linear phenomena that produce an output $g$ from an input $f$ according to a linear law $A$, so that from the exact or approximate measurement of $g$ one tries to recover exact or approximate information on $f$, gives rise to the following abstraction. The possible inputs and outputs form an abstract linear vector space $\cH$ on which an action is performed by a linear operator $A$, and given a datum $g\in\mathrm{ran}A$ one searches for solution(s) to the inverse linear problem $Af=g$. In fact, a vast variety of phenomena are encompassed by such a mathematical generalisation when $\cH$ is taken to be a (possibly infinite-dimensional) inner product and complete complex vector space, namely a complex Hilbert space, and $A$ is a closed linear operator acting on $\cH$. The (somewhat minimal) requirement of operator closedness is aimed at having a non-trivial notion of spectrum of $A$, hence to allow for the possible use of spectral methods in solving $Af=g$. The primary interest in such setting is to obtain convenient approximations of $f$ in terms of approximants produced by certain algorithms.

Here various levels of abstraction are implemented: (a) the dimensionality of $\cH$, finite or infinite, (b) the boundedness or unboundedness of $A$, (c) the spectral properties of $A$ (from a purely discrete spectrum to richer structures with continuous components, separated or not from zero, and so on). When $\dim\cH<\infty$ the inverse problem involves finite matrices and is typically under a very accurate control in all its aspects (algebraic, analytical, numerical, including in applications the control of the rate of convergence of approximants, etc.). To a lesser degree one has a theory of inverse linear problems governed by bounded operators on infinite-dimensional spaces: it consists of a more limited amount of sophisticated results, only few of which have a counterpart in the unbounded case. The literature is obviously enormous: we refer to that body of ideas and tools generally called iterative methods \cite{Saad-2003_IterativeMethods}, Petrov-Galerkin methods \cite{Ern-Guermond_book_FiniteElements,Quarteroni-book_NumModelsDiffProb}, generalised projection methods \cite{CMN-truncation-2018}, Krylov projection methods \cite{Saad-2003_IterativeMethods,Liesen-Strakos-2003}, in which as said the complex of knowledges when $A$ is bounded on an infinite-dimensional $\cH$ is certainly less systematic,
let alone when $A$ itself is unbounded.

With a clear motivation from applications, in the abstract problem outlined above it is relevant to investigate when the solution $f$ admits a subspace of distinguished approximants in $\cH$, explicitly constructed from $A$ and $g$ as finite linear combinations of $g,Ag,A^2g,$ $A^3g,\dots$. That is, one introduces the `\emph{Krylov subspace}'
\begin{equation}\label{eq:defKrylov}
  \mathcal{K}(A,g) \;:=\;  \mathrm{span} \{ A^kg\,|\,k\in\mathbb{N}_0\}\;\subset\;\cH\,,
\end{equation}
and inquires whether $f\in\overline{\mathcal{K}(A,g)}$ (the closure of $\mathcal{K}(A,g)$ in the norm topology of $\cH$). When this is the case, the problem $Af=g$ is said to be `\emph{Krylov solvable}', and one refers to the solution(s) $f$ as `\emph{Krylov solution(s)}'.

Let us stress that for unbounded $A$ the notion of Krylov subspace only makes sense if the datum $g$ is `$A$-\emph{smooth}', meaning $g\in C^\infty(A)$ with
\begin{equation}\label{eq:gsmooth}
 C^\infty(A)\;:=\;\bigcap_{N\in\mathbb{N}}\mathcal{D}(A^N)\,,
\end{equation}
where $\mathcal{D}(\cdot)$ is the notation for the operator domain (in applications where $A$ is a differential operator, $g\in C^\infty(A)$ is a regularity requirement); $A$-smoothness is automatic when $A$ is everywhere defined and bounded on $\cH$. Let us also observe that occurrence $\overline{\mathcal{K}(A,g)}=\cH$ (that makes the Krylov solvability question trivial) corresponds to the fact that $g$ is a cyclic vector for $A$. Noticeably, the set of cyclic vectors for a bounded operator is either empty or dense in $\cH$ \cite{Geher-1972}, and it is unknown whether there exists a bounded operator on a separable Hilbert space $\cH$ such that \emph{every} non-zero vector in $\cH$ is cyclic. A prototypical mechanism for $\overline{\mathcal{K}(A,g)}$ to only be a proper closed subspace of $\cH$ is provided by the right shift operator $R$ on $\ell^2(\mathbb{N}_0)$ (defined as usual by $R e_n=e_{n+1}$ on the canonical basis): for instance, $\overline{\mathcal{K}(R,e_1)}$ is the orthogonal complement to the span of $e_0$. Another such mechanism is when $A$ is reduced with respect to the Hilbert space decomposition $\cH=\cH_1\oplus\cH_2$ and $g\in\cH_1$.

Again, it is no surprise that Krylov solvability is well understood in finite dimensions \cite{Saad-2003_IterativeMethods,Liesen-Strakos-2003}, with instead only partial results for bounded $A$ in infinite dimensions \cite{Karush-1952,Daniel-1967,Kammerer-Nashed-1972,Nemirovskiy-Polyak-1985,Nemirovskiy-Polyak-1985-II,Winther-1980,Herzog-Ekkehard-2015,CMN-2018_Krylov-solvability-bdd} or for unbounded $A$ \cite{CM-Nemi-unbdd-2019,CM-2019_ubddKrylov}.

In the bounded case it is worth recalling a few facts we recently established in \cite{CMN-2018_Krylov-solvability-bdd}. As customary, let us denote by $\mathcal{B}(\cH)$ the algebra of operators that are everywhere defined and bounded on $\cH$, equipped with the usual operator norm $\|A\|_{\mathrm{op}}$. Thus, let now $A\in\mathcal{B}(\cH)$ and $g\in\mathrm{ran}A$.
\begin{itemize}
 \item[(I)] If $A$ is reduced with respect to the `\emph{Krylov decomposition}'
 \[
  \cH\;=\;\overline{\mathcal{K}(A,g)}\oplus\mathcal{K}(A,g)^\perp
 \]
 (for short, if $A$ is `$\mathcal{K}(A,g)$-\emph{Krylov-reducible}'), then there exists a Krylov solution to $Af=g$.
 \item[(II)] If the `\emph{Krylov intersection}' subspace
 \[
  \mathcal{I}(A,g)\;:=\;\overline{\mathcal{K}(A,g)}\cap A(\mathcal{K}(A,g)^\perp)
 \]
 is the trivial set $\{0\}$, then there exists a Krylov solution to $Af=g$.
\end{itemize}
 Thus, (I) and (II) provide mechanisms for Krylov solvability, the second being in fact more general (Krylov reducibility always implies triviality of the Krylov intersection, but not the other way around, in general). (II) is in a sense \emph{the} intrinsic mechanism under the additional condition that $A$ has everywhere defined and bounded inverse, for in this case Krylov solvability is \emph{equivalent} to the $\mathcal{I}(A,g)$-triviality. Furthermore:	
\begin{itemize}
 \item[(III)] If $A$ is normal (or, more generally, if $\ker A\subset\ker A^*$), then the Krylov solution to $Af=g$, if existing, is unique.
 \item[(IV)] If $A$ is self-adjoint, then the problem $Af=g$ admits a unique Krylov solution $f$.
\end{itemize}
 Property (IV) when in particular $A$ is positive definite was previously established by Nemirovskiy and Polyak \cite{Nemirovskiy-Polyak-1985}, by showing that the sequence of Krylov approximants obtained by the conjugate gradient algorithm converges strongly to the exact solution. Further examples and classes of operators in $\mathcal{B}(\cH)$ giving rise to Krylov solvable problems were discussed in \cite{CMN-2018_Krylov-solvability-bdd}.

 To come the object of the present work, let us stress that the picture outlined so far concerns inverse linear problems where both the datum $g$ and the linear operator $A$ are known \emph{exactly} -- our analysis here is abstract operator-theoretic in nature, but with reference to the initial motivation, it is as if the law $A$ is precisely understood and the output $g$ is measured with full precision. A more general perspective is to allow for amount of uncertainty affecting the knowledge of $A$, or $g$, or both. Or, from another point of view, instead of only focusing on the inverse problem of interest, one may consider also an auxiliary, possibly more tractable problem, close in some sense to the original one, which allows for useful approximate information.

 Thus, in this work we consider \emph{perturbations} of the original problem $Af=g$ of the form $A'f'=g'$, where $A$ and $A'$, as well as $g$ and $g'$ are close in a controlled sense, and we study the \emph{effect of the perturbation on the Krylov solvability}.

 This context is clearly connected with the general framework of ``ill-posed'' inverse linear problems \cite{Hanke-ConjGrad-1995,Hansen-Illposed-1998}, where only the perturbed quantities $A'$ or $g'$ are accessible, due for instance to measurement errors, and ill-posedness manifests for instance through the fact that $g'\notin\mathrm{ran}A$, the goal being to approximate the actual solution $f$ in a controlled sense.

 Yet, the questions that we intend to address have a different spirit. We keep regarding $A$ and $g$ as exactly known or, in principle, exactly accessible, but with the idea that close to the problem $Af=g$ there is a perturbed problem $A'f'=g'$ that serves as an auxiliary one, possibly more easily tractable, say, with Krylov subspace methods, in order to obtain conclusions on the Krylov solvability of the original problem. Or, conversely, we inquire under which conditions the nice property of Krylov solvability for $Af=g$ is stable enough to survive a small perturbation (that in applications could arise, again, from experimental or numerical uncertainties), or when instead Krylov solvability is washed out by even small inaccuracies in the precise knowledge of $A$ or $g$  -- an occurrence in which Krylov subspace methods would prove to be unstable. And, more abstractly, we pose the question of a convenient notion of vicinity between the subspaces $\overline{\mathcal{K}(A,g)}$ and $\overline{\mathcal{K}(A',g')}$ when $A$ and $A'$ (respectively, $g$ and $g'$) are suitably close.

 In Section \ref{sec:questions} we elaborate more diffusely on this body of questions and their conceptual relevance of the present abstract setting.

There exists a large amount of literature that accounts for perturbations in Krylov subspace methods, most of which approaches the problem from the point of view of inexact Krylov methods (see, e.g., \cite{Zemke-2007,Vandeneshof-2005,Simoncini-Szyld-inexact-2005,Simoncini-Szyld-inexact-2003,Sifuentes-Embree-Morgan-2013,Xue-Elman-2011,Du-Sarkis-Schaerer-Szyld-2013}). A good outline of the theory of inexact Krylov methods may be found in \cite{Simoncini-Szyld-inexact-2003}, and see in particular \cite{Zemke-2007} for a general setting of Krylov algorithms under perturbations.  
The idea underlying inexact Krylov methods is that the exact (typically non-singular) inverse linear problem in $\C^N$, $Af = g$, is perturbed in $A$ by a series of linear operators $(E_k)_{k=1}^N$ on $\C^N$ that may change at each step $k$ of the algorithm. Typical scenarios that could induce such perturbations at each step of the algorithm are, but not limited to, truncation and rounding errors in finite precision machines, or approximation errors from calculating complicated matrix-vector products.
The main results reported in \cite{Zemke-2007,Vandeneshof-2005,Simoncini-Szyld-inexact-2005,Simoncini-Szyld-inexact-2003,Sifuentes-Embree-Morgan-2013,Xue-Elman-2011,Du-Sarkis-Schaerer-Szyld-2013} include the convergence behaviour of the error and residual terms, in particular their rates, and typically bounds on how far these indicators of convergence are from the unperturbed setting at a given iteration number $k$. 

Yet, these investigations are of a different nature than what we propose here. To begin with, the typical analysis of inexact Krylov methods is in the finite-dimensional setting, where we already have a good control over the Krylov-solvability of inverse problems as well as the rates of convergence to a solution. Furthermore, it is not discussed in this literature how the underlying Krylov subspaces themselves change, as well as the richer phenomena pertaining to the Krylov-solvability (or lack-of) of the underlying problem and its perturbations; some of the very questions we are interested in investigating.
 
 The point is that, to our knowledge, this line of investigation is so far essentially uncharted. With this spirit, and in view of the set of general questions outlined in Section \ref{sec:questions}, in Section \ref{sec:gain-loss} we present an overview of typical phenomena that may occur to the Krylov solvability of an inverse linear problem $Af=g$ in terms of the Krylov solvability, or lack of thereof, of auxiliary inverse problems where $A$ or $g$ or both are perturbed in a controlled sense. Such survey indicates that the sole control of the operator or of the data perturbation, in the respective operator and Hilbert norm, still leaves the possibility open to all phenomena such as the persistence, gain, or loss of Krylov solvability in the limit $A_n\to A$ or $g_n\to g$, where $A_n$ (and so $g_n$) is the generic element of a sequence of perturbed objects. The implicit explanation is that an information like $A_n\to A$ or $g_n\to g$ is not enough to account for a suitable vicinity of the corresponding Krylov subspaces -- we discussed in points (I) and (II) above that the Krylov solvability of the inverse problem $Af=g$ corresponds to certain structural properties of the subspace $\mathcal{K}(A,g)$, therefore one implicitly needs to monitor how the latter properties are preserved or altered under the perturbation. This also suggests that the additional constraint of performing the perturbation within certain subclasses of operators may supplement further information on Krylov solvability: this is in principle a vast programme, in Section \ref{sec:K-class} we focus on the operators of $\mathscr{K}$-class we had previously considered in \cite{CMN-2018_Krylov-solvability-bdd}, and discuss the robustness and fragility of this class from the perturbative perspective of the induced inverse problems.

 In the second part of this work, Sections \ref{sec:weakgapmetric}-\ref{sec:Kry-perturb-and-wgmetric}, we address more systematically the issue of vicinity of Krylov subspaces in a sense that be informative for the Krylov solvability of the corresponding inverse problems. What shows encouraging properties, next to some serious limitations, though, is the comparison of (the closures of) two Krylov subspaces in terms of the Hausdorff distance between the respective unit balls, considered as closed subset of the Hilbert unit ball when the latter is metrised with respect to the \emph{weak} Hilbert topology. (Had we used the \emph{norm} topology, that would have not even controlled the very intuitive convergence of the finite-dimensional Krylov subspaces, namely with iterates up to some $A^{N_0}g$, to its infinite-dimensional counterpart, as $N_0\to\infty$.) This framework leads to appealing approximation results, as the inner approximability of Krylov subspaces established in Subsect.~\ref{sec:innerapprox}. Right after, Proposition \ref{prop:Krisolv-along-onelimit} is a prototype of the kind of perturbative results we had originally in mind, namely a control of the perturbation, formulated in terms of the perturbed and unperturbed Krylov subspaces, that \emph{predicts} the persistence of Krylov solvability when the perturbation is removed.

 In this spirit we rather intended -- and in the above sense managed -- to open a perspective on a general problem, essentially not addressed so far, that is operator-theoretic in nature, yet with direct motivations from Krylov approximation algorithms in numerical computation. The corpus of partial results that we present here only scratch the surface of a problem that in our intentions need be further investigated. We shall collect more explicit conclusions in this sense in the final Section \ref{sec:conclusions}.

 \medskip

 \textbf{Notation.} Besides further notation that will be declared in due time, we shall keep the following convention. $\cH$ denotes a complex Hilbert space with norm $\|\cdot\|$ and scalar product $\langle\cdot,\cdot\rangle$, anti-linear in the first entry and linear in the second. Norm and weak convergence in $\cH$ are denoted, as usual, with $x_n\to x$ and $x_n\rightharpoonup x$.  $\mathcal{B}(\cH)$ is the complete, norm, $*$-algebra of everywhere defined and bounded linear operators on $\cH$, equipped with customary operator norm $\|\cdot\|_{\mathrm{op}}$. We shall often omit the adjective `linear', with reference to operators. $\mathbbm{1}$ and $\mathbbm{O}$ denote, respectively, the identity and the zero operator. $\sigma(A)$ denotes the spectrum of some (closed) linear operator $A$ on $\cH$. 
 $\overline{\mathcal{V}}$ is the norm closure of the span of the vectors in $\mathcal{V}$ when $\mathcal{V}$ is a subset of $\cH$, and $\mathcal{V}^\perp$ is the largest closed subspace of $\cH$ whose vectors are orthogonal to all elements of the subset $\mathcal{V}\subset\cH$. For $\psi,\varphi\in\cH$, by $|\psi\rangle\langle\psi|$ and $|\psi\rangle\langle\varphi|$ we shall denote the $\cH\to\cH$ rank-one maps acting respectively as $f\mapsto \langle \psi, f\rangle\,\psi$ and $f\mapsto \langle \varphi, f\rangle\,\psi$ on generic $f\in\cH$.

 \section{Krylov solvability from a perturbative perspective}\label{sec:questions}

 As argued already in the Introduction, the question of the effects of perturbations on the Krylov solvability of an infinite-dimensional inverse linear problem is essentially new.

 One easily realises that such question takes a multitude of related, yet somewhat different formulations depending on the precise perspective one looks at it. Given the essential novelty of this line of investigation, we find it instructive to organise the most relevant of such queries into a coherent scheme -- which is the goal of this Section. This serves both as a reference for the results and explicit partial answers that we  give in this work, as well as an ideal road map for future studies.

 In practice, let us discuss the following main categories of connected problems. For the first three of them, we work in the bounded case, with $\cH$ being the underlying complex infinite-dimensional Hilbert space.

 \textbf{I. Comparison between ``close'' Krylov subspaces.} This is the abstract problem of providing a meaningful comparison between $\overline{\mathcal{K}(A,g)}$ and $\overline{\mathcal{K}(A',g')}$, as two closed subspaces of $\cH$, for given $A,A'\in\mathcal{B}(\cH)$ and $g,g'\in\cH$ such that in some convenient sense $A$ and $A'$, as well as $g$ and $g'$ are close. As a priori such subspaces might only have a trivial intersection, the framework is rather that of comparison of subspaces of a normed space, in practice introducing convenient topologies or metric distances.

 A more application-oriented version of the same problem is the following. Given $A\in\mathcal{B}(\cH)$ and $g\in\cH$, one considers approximants of one or the other (or both), say, sequences $(A_n)_{n\in\mathbb{N}}$ and $(g_n)_{n\in\mathbb{N}}$ respectively in $\mathcal{B}(\cH)$ and $\cH$, such that $\|A_n-A\|_{\mathrm{op}}\to 0$ and $\|g_n-g\|\to 0$ as $n\to\infty$. Then the question is whether a meaningful notion of limit $\overline{\mathcal{K}(A_n,g_n)}\to\overline{\mathcal{K}(A,g)}$ can be defined.

 \textbf{II. Perturbations preserving/creating Krylov solvability.} This question is inspired to the possibility that, given $A\in\mathcal{B}(\cH)$ and $g\in\mathrm{ran} A$, instead of solving the ``difficult'' inverse problem $Af=g$ one solves a convenient perturbed problem $A'f'=g'$, with $A'\in\mathcal{B}(\cH)$ and $g'\in\mathrm{ran} A'$ close respectively to $A$ and $g$, which is ``easily'' Krylov solvable, and the Krylov solution of which provides approximate information to the original solution $f$.
 
 Here is an explicit set-up for this question. Assume that one finds $(A_n)_{n\in\mathbb{N}}$ in $\mathcal{B}(\cH)$ and $(g_n)_{n\in\mathbb{N}}$ in $\cH$ such that the inverse problems $A_nf_n=g_n$ are all Krylov solvable and 
 $\|A_n-A\|_{\mathrm{op}}\to 0$ and $\|g_n-g\|\to 0$ as $n\to\infty$. Is $Af=g$ Krylov solvable too? And if at each perturbed level $n$ there is a unique Krylov solution $f_n$, does one have $\|f_n-f\|\to 0$ where $f$ is a (Krylov) solution to $Af=g$? 
 
 One scenario of applications is that for $A_nf_n=g_n$ Krylov solvability comes with a much more easily (say, faster) solvable solution algorithm, so that $f$ is rather determined as $f=\lim_{n\to\infty}f_n$ instead of directly approaching the problem $Af=g$. 
 
 Another equally relevant scenario is that the possible Krylov solvability of the problem of interest $Af=g$ is initially \emph{unknown}, and prior to launching resource-consuming Krylov algorithms for solving the problem, one wants to be guaranteed that a Krylov solution indeed exists. To this aim one checks the Krylov solvability for $A_n f_n =g_n$ uniformly in $n$, and the convergence $A_n\to A$, $g_n\to g$, thus coming to an affirmative answer.

 \textbf{III. Perturbations destroying Krylov solvability.}  The opposite occurrence has to be monitored as well, namely the possibility that a small perturbation of the Krylov solvable problem $Af=g$ produces a non-Krylov solvable problem $A'f'=g'$. Say, if in the above setting \emph{none} of the problems $A_nf_n=g_n$ are Krylov solvable and yet $A_n\to A$ and $g_n\to g$, under what conditions does one gain Krylov solvability in the limit for the problem $Af=g$? A comprehension of this phenomenon would be of great relevance to identify those circumstances when Krylov methods are intrinsically unstable, in the sense that even a tiny uncertainty in the knowledge of $A$ and/or $g$ brings to a perturbed problem $A'f'=g'$ for which, unlike the exact problem of interest $Af=g$, Krylov methods are not applicable.

 In the \emph{unbounded} case, more precisely when the operator $A$ is closed and unbounded on $\cH$, in principle all the above questions have their own counterpart, except that the fundamental condition $g\in C^\infty(A)$ required to have a meaningful notion of $\mathcal{K}(A,g)$ is highly unstable under perturbations, and one has to ensure case by case that certain problems are well posed.

 Yet, for its evident relevance let us highlight the following additional class of questions.

 \textbf{IV. Perturbations-regularisations exploiting Krylov solvability.} For the problem of interest $Af=g$ one might well have $g\in\mathrm{ran}A$ \emph{but} $g\notin C^\infty(A)$. In this case Krylov methods are not applicable: there is no actual notion of Krylov subspace associated to $A$ and $g$, hence no actual Krylov approximants to utilize iteratively. Assume though that one finds a sequence $(g_n)_{n\in\mathbb{N}}$ entirely in $\mathrm{ran}A\cap C^\infty(A)$ with $\|g_n-g\|\to 0$. This occurrence is quite typical: if $A$ is a differential operator on $L^2(\mathbb{R}^d)$, everyone is familiar with sequences $(g_n)_{n\in\mathbb{N}}$ of functions that all have high regularity, uniformly in $n$, and for which the $L^2$-limit $g_n\to g$ produces a rough function $g$. Assume further that each problem  $Af_n=g_n$ is Krylov solvable. For example, as we showed in \cite[Theorem 4.1]{CM-2019_ubddKrylov}, for a vast class of self-adjoint or skew-adjoint $A$'s, possibly unbounded, there exists a unique solution $f_n\in\overline{\mathcal{K}(A,g_n)}$. This brings the following questions. First, do the $f_n$'s have a limit $f$ and does $f$ solve $Af=g$? And, more abstractly speaking, is there a meaningful notion of the limit $\lim_{n\to\infty}\overline{\mathcal{K}(A,g_n)}$, \emph{irrespectively} of the approximant sequence $(g_n)_{n\in\mathbb{N}}$, that could then be interpreted as a replacement for the non-existing Krylov subspace associated to $A$ and $g$? The elements of such limit subspace would provide exploitable approximants for the solution to the original problem $Af=g$.

 One last remark concerns the topologies underlying all the questions above. We explicitly formulated them in terms of the operator norm and Hilbert norm, but alternatively there is a variety of weaker notions of convergence that are still highly informative for the solution to the considered inverse problem -- we discussed this point extensively in 
 \cite{CMN-truncation-2018}. Thus, the ``weaker'' counterpart of the above questions represents equally challenging and potentially useful problems to address.

\section{Gain or loss of Krylov solvability under perturbations}\label{sec:gain-loss}

This Section is meant to present examples of different behaviours that may occur in those cases belonging to the categories II and III contemplated in the previous Section. In practice we are comparing here the ``unperturbed'' inverse linear problem $Af=g$ with ``perturbed'' problems of the form $Af_n=g_n$, or $A_nf_n=g$, along a sequence $(g_n)_{n\in\mathbb{N}}$ such that $\|g_n-g\|\to 0$, or along a sequence $(A_n)_{n\in\mathbb{N}}$ such that $\|A_n-A\|_{\mathrm{op}}\to 0$. Our particular focus is the Krylov solvability, namely its preservation, or gain, or loss in the limit $n\to\infty$. Next to operator perturbations ($A_n\to A$) and data perturbations ($g_n\to g$), it is also natural to consider simultaneous perturbations of the both of them.

The purpose here is two-fold: we want to convey a concrete flavour of how inverse problems behave under controlled perturbations of the operator or of the datum, as far as having Krylov solutions is concerned, and we also want to highlight the emerging, fundamental, and a priori unexpected lesson. Which is going to be, in short: the sole control that $A_n\to A$ or $g_n\to g$ is not enough to predict whether Krylov solvability is preserved, or gained, or lost in the limit, i.e., each such behaviour can actually occur. The immediate corollary of this conclusion is: one must describe the perturbation of the problem $Af=g$ by means of additional information, say, by restricting to particular sub-classes of inverse problems, or by introducing suitable notions of vicinity of Krylov subspaces, in order to control the effect of the perturbation on Krylov solvability. It is this latter consideration that motivates the more specific discussion of Section \ref{sec:K-class} and of Sections \ref{sec:weakgapmetric}-\ref{sec:Kry-perturb-and-wgmetric}.

For the examples that follow we shall choose concrete playgrounds that allow for the (in general non-trivial) explicit identification of the Krylov subspace. 
\begin{itemize}
 \item As typical cases of Krylov solvable inverse problems we should have in mind, for instance, self-adjoint operators $A\in\mathcal{B}(\cH)$ with $g\in\mathrm{ran}\,A$ (\cite[Corollary 3.11]{CMN-2018_Krylov-solvability-bdd}), or the Volterra operator on $\cH=L^2[0,1]$, namely the compact, normal, linear map $V$ such that $(Vf)(x):=\int_0^xf(y)\ud y$ (in particular, $x^k\mapsto \frac{1}{k+1}x^{k+1}$ for any $k\in\mathbb{N}_0$), for which we know that $\overline{\mathcal{K}(V,g)}=\cH$ for any monomial $g=x^k$ (\cite[Example 3.1]{CMN-2018_Krylov-solvability-bdd}). One may find other possibilities discussed in our work \cite{CMN-2018_Krylov-solvability-bdd}. 
 \item Instead, as a typical source of lack of Krylov solvability (\cite[Appendix A]{CMN-2018_Krylov-solvability-bdd}) we use the right-shift operator on $\ell^2$-spaces: the basic version is on $\cH=\ell^2(\mathbb{N})$, with canonical orthonormal basis  $(e_k)_{k\in\mathbb{N}}$, where the right-shift is $R=\sum_{k\in\mathbb{N}}|e_{k+1}\rangle\langle e_k|$ (the sum converging strongly in the operator sense, $\|R\|_{\mathrm{op}}=1$, $Re_k=e_{k+1}$). Other variants are the right shift on $\ell^2(\mathbb{Z})$, or the compact counterpart  $R=\sum_{k}\lambda_{|k|}|e_{k+1}\rangle\langle e_k|$ with weights $\lambda_k>\lambda_{k+1}>0$ and $\lambda_k\xrightarrow{k\to\infty}0$ (now the series converging in operator norm). Any such $R$ admits both a dense of non-cyclic vectors, and a dense of cyclic vectors (see, e.g., \cite{Herrero-1972,Shkarin-2006-supecylic-shift}).
\end{itemize}

\subsection{Operator perturbations}~

\begin{example}
 Let $R$ be the weighted (compact) right-shift operator 
 \[
  R\;:=\;\sum_{k=1}^\infty\frac{1}{\,k^2}|e_{k+1}\rangle\langle e_k|
 \]
 on the Hilbert space $\ell^2(\mathbb{N})$, and define 
 \[
  R_n\;:=\;\sum_{k=1}^{n-1}\frac{1}{\,k^2}|e_{k+1}\rangle\langle e_k|+\frac{1}{\,n^2}|e_1\rangle\langle e_n|\,,\qquad n\in\mathbb{N}\,,\;n\geqslant 2\,.
 \]
 As
 \[
  \begin{split}
  R-R_n\;&=\;\sum_{k=n}^{\infty}\frac{1}{\,k^2}|e_{k+1}\rangle\langle e_k|-\frac{1}{\,n^2}|e_1\rangle\langle e_n|\,, \\
  \|R-R_n\|_{\mathrm{op}}\;&\leqslant\;\sum_{k=n}^{\infty}\frac{1}{\,k^2}+\frac{1}{\,n^2}\,,
  \end{split}
 \]
 then $R_n\to R$ in operator norm as $n\to\infty$. For $g:=e_2$ and any $n\geqslant 2$, the inverse problem induced by $R_n$ and with datum $g$ has unique solution $f_n:=f:=e_1$, and so does the inverse problem induced by $R$ and with the same datum, i.e., $R_nf_n=g$ and $Rf=g$. On the other hand,
 \[
  \begin{split}
   \overline{\mathcal{K}(R_n,g)}\;&=\;\mathcal{K}(R_n,g)\;=\;\mathrm{span}\{e_1,\dots,e_n\}\,, \\
   \overline{\mathcal{K}(R,g)}\;&=\;\{e_1\}^\perp\,.
  \end{split}
 \]
 Thus, the inverse problem $R_nf_n=g$ is Krylov solvable, and obviously $f_n\to f$ in norm, yet the inverse problem $Rf=g$ is not.
\end{example}

\begin{example}
 Let $R=\sum_{k=1}^\infty|e_{k+1}\rangle\langle e_k|$ be the right-shift operator on the Hilbert space $\ell^2(\mathbb{N})$, and let
 \[
  \begin{split}
   A_n\;&:=\;|e_2\rangle\langle e_2|+\frac{1}{n}R\,,\qquad n\in\mathbb{N}\,, \\
   A\;&:=\;|e_2\rangle\langle e_2|\,, \\
   g\;&:=\;e_2\,.
  \end{split}
 \]
 Clearly, $A_n\to A$  in operator norm as $n\to\infty$.
 The inverse problem $A_nf_n=g$ has unique solution $f_n=n e_1$; as $(A_n)^kg=\sum_{j=0}^k n^{-j}e_{j+2}$ for any $k\in\mathbb{N}_0$, and therefore $\overline{\mathcal{K}(A_n,g)}=\{e_1\}^\perp$, such solution is not a Krylov solution. Instead, passing to the limit, the inverse problem $Af=g$ has unique solution $f=e_2$, which is a Krylov solution since $\overline{\mathcal{K}(A,g)}=\mathrm{span}\{e_2\}$. Observe also that $f_n$ does not converge to $f$.

\end{example}

\subsection{Data perturbations}~

\begin{example}\label{ex:gainloss-with-shift}
 Let $R:\ell^2(\mathbb{Z})\to\ell^2(\mathbb{Z})$ be the usual right-shift operator. $R$ is unitary, with $R^*=R^{-1}=L$, the left-shift operator. Moreover $R$ admits a dense subset $\mathcal{C}\subset\ell^2(\mathbb{Z})$ of cyclic vectors and a dense subset $\mathcal{N}\subset\ell^2(\mathbb{Z})$, consisting of all finite linear combinations of canonical basis vectors, such that the solution $f$ to the inverse problem $Rf=g$ does not belong to $\overline{\mathcal{K}(R,g)}$. All vectors in $\mathcal{N}$ are non-cyclic for $R$.
 \begin{itemize}
  \item[(i)] (Loss of Krylov solvability.) For a datum $g\in\mathcal{N}$, the inverse problem $Rf=g$ admits a unique solution $f$, and $f$ is not a Krylov solution. Yet, by density, there exists a sequence $(g_n)_{n\in\mathbb{N}}$ in $\mathcal{C}$ with $g_n\to g$ (in $\ell^2$-norm) as $n\to\infty$, and each perturbed inverse problem $Rf_n=g_n$ is Krylov solvable with unique solution $f_n=R^{-1}g_n=Lg_n$, and with $f_n\to f$ as $n\to\infty$. Krylov solvability is lost in the limit, still with the approximant Krylov solutions converging to the solution $f$ to the original problem.
  \item[(ii)] (Gain of Krylov solvability.) For a datum $g\in\mathcal{C}$, the inverse problem $Rf=g$ is obviously Krylov solvable, as $\overline{\mathcal{K}(R,g)}=\ell^2(\mathbb{Z})$, owing to the cyclicity of $g$. Yet, by density, there exists a sequence $(g_n)_{n\in\mathbb{N}}$ in $\mathcal{N}$ with $g_n\to g$, and each perturbed inverse problem $Rf_n=g_n$ is not Krylov solvable. Krylov solvability is absent along the perturbations and only emerges in the limit, still with the solution approximation $f_n\to f$.
 \end{itemize}
\end{example}

\begin{example}\label{ex:gainloss-generic}

With respect to the Hilbert space orthogonal sum $\cH=\cH_1\oplus\cH_2$, let $A=A_1\oplus A_2$ with $A^{(j)}\in\mathcal{B}(\cH_j)$, and $g^{(j)}\in\mathrm{ran}\,A^{(j)}$, $j\in\{1,2\}$, such that the problem $A^{(1)}f^{(1)}=g^{(1)}$ is Krylov solvable in $\cH_1$, with Krylov solution $f^{(1)}$ (for instance, $\cH_1=L^2[0,1]$, $A^{(1)}=V$, the Volterra operator, $g^{(1)}=x$, $f^{(1)}=\mathbf{1}$), and the problem $A^{(2)}f^{(2)}=g^{(2)}$ is not Krylov solvable in $\cH_2$ (for instance, $\cH_2=\ell^2(\mathbb{N})$, $A^{(2)}=R$, the right shift, $g^{(2)}=e_2$, $f^{(2)}=e_1$).
  \begin{itemize}
  \item[(i)] (Lack of Krylov solvability persists in the limit.) The inverse problems $A f_n=g_n$, $n\in\mathbb{N}$, with $g_n:=\big(\frac{1}{n}g^{(1)}\big)\oplus g^{(2)}$, are all non-Krylov solvable, with solution(s) $f_n=\big(\frac{1}{n}f^{(1)}\big)\oplus f^{(2)}$. In the limit, $g_n\to g:=0\oplus g^{(2)}$ in $\cH$, whence also $f_n\to 0\oplus f^{(2)}=:f$. The inverse problem $Af=g$ has solution $f$ (modulo $\ker  A^{(1)}\oplus \{0\}$), but $f$ is not a Krylov solution.
  \item[(ii)] (Krylov solvability emerges in the limit.) The inverse problems $A f_n=g_n$, $n\in\mathbb{N}$, with $g_n:=g^{(1)}\oplus\big(\frac{1}{n}g^{(2)}\big)$, are all non-Krylov solvable, with solution(s) $f_n=f^{(1)}\oplus\big(\frac{1}{n}f^{(2)}\big)$. In the limit, $g_n\to g:=g^{(1)}\oplus 0$ in $\cH$, whence also $f_n\to f^{(1)}\oplus 0=:f$. The inverse problem $Af=g$ has solution $f$ (modulo $\{0\}\oplus\ker  A^{(2)}$), and $f$ is a Krylov solution.
  \end{itemize}
\end{example}

\subsection{Simultaneous perturbations of operator and data}~

\begin{example}\label{ex:gainloss-simultaneous}
Same setting as in Example \ref{ex:gainloss-generic}.
 \begin{itemize}
	\item[(i)] (Lack of Krylov solvability persists in the limit.) 	The inverse problems $A_nf_n=g_n$, $n\in\mathbb{N}$, with $A_n := (\frac{1}{n} A^{(1)}) \oplus A^{(2)}$ and $g_n := (\frac{1}{n} g^{(1)}) \oplus g^{(2)}$, are all non-Krylov solvable, with solutions $f_n=f^{(1)}\oplus f^{(2)}$. In the limit, $A_n \to A := \mathbbm{O} \oplus A^{(2)}$ in operator norm and $g_n \to g := 0 \oplus g^{(2)}$ in $\cH$. The inverse problem $A f = g$ has solution $f:=0\oplus f^{(2)}$ (modulo $\ker  A^{(1)}\oplus \{0\}$), which is not a Krylov solution. Moreover in general $f_n$ does not converge to $f$.
	\item[(ii)](Krylov solvability emerges in the limit.) The inverse problems $A_nf_n=g_n$, $n\in\mathbb{N}$, with $A_n := A^{(1)} \oplus (\frac{1}{n} A^{(2)})$ and $g_n := g^{(1)} \oplus (\frac{1}{n} g^{(2)})$, are all non-Krylov solvable, with solutions $f_n=f^{(1)}\oplus f^{(2)}$. In the limit, $A_n \to A := A^{(1)}\oplus \mathbbm{O}$ in operator norm and $g_n \to g := g^{(1)}\oplus 0$ in $\cH$. The inverse problem $A f = g$ has solution $f:=f^{(1)}\oplus 0$ (modulo $\{0\}\oplus\ker  A^{(2)}$), which is a Krylov solution. Moreover in general $f_n$ does not converge to $f$.
 \end{itemize} 
\end{example}

 \section{Krylov solvability along perturbations of $\mathscr{K}$-class}\label{sec:K-class}

 In a previous work \cite{CMN-2018_Krylov-solvability-bdd} we singled out a class of operators in $\mathcal{B}(\cH)$ that in retrospect display relevant behaviour as far as Krylov solvability along perturbations is concerned. In this Section we elaborate further on that class, in view of the scheme of general questions presented in Sect.~\ref{sec:questions}.

 By definition, a linear operator $A$ acting on a complex Hilbert space $\cH$ is in the $\mathscr{K}$-class when $A$ is everywhere defined and bounded, and there exists a bounded open $\mathcal{W}\subset\mathbb{C}$ containing the spectrum $\sigma(A)$ and such that $0\notin\overline{\mathcal{W}}$ and $\mathbb{C}\setminus\mathcal{W}$ is connected. In particular, a $\mathscr{K}$-class operator has everywhere defined bounded inverse.

 We have this result.
 
 \begin{theorem}\label{thm:Kclass}
  Let $A$ be a $\mathscr{K}$-class operator on a complex Hilbert space $\cH$.
  \begin{itemize}
   \item[(i)] For every $g\in\cH$ the inverse problem $Af=g$ is Krylov solvable, with unique solution $f=A^{-1}g$.
   \item[(ii)] The $\mathscr{K}$-class is open in $\mathcal{B}(\cH)$. In particular, there is $\varepsilon_A>0$ such that for any other operator $A'\in\mathcal{B}(\cH)$ with $\|A'-A\|_{\mathrm{op}}<\varepsilon_A$ the inverse problem $A'f'=g$ has a unique solution, $f'={A'}^{-1}g$, which is also a Krylov solution.
   \item[(iii)] When in addition (and without loss of generality) $\|A-A'\|_{\mathrm{op}}\leqslant(2\|A^{-1}\|_{\mathrm{op}})^{-1}$, then $f$ and $f'$ from (i) and (ii) satisfy
   \[
    \|f-f'\|\;\leqslant\;2\,\|g\|\,\|A^{-1}\|_{\mathrm{op}}^2\:\|A'-A\|_{\mathrm{op}}\,.
   \]
  \end{itemize}  
 \end{theorem}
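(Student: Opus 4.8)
The plan is to treat the three parts in order, with the genuine content concentrated in (i); parts (ii) and (iii) will then follow from standard Neumann-series manipulations.

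For (i), the first observation is that $0\notin\overline{\mathcal{W}}\supset\sigma(A)$ forces $A$ to be invertible in $\mathcal{B}(\cH)$, so $Af=g$ already has the unique solution $f=A^{-1}g$ and only \emph{Krylov} solvability is at stake, i.e.\ one must show $A^{-1}g\in\overline{\mathcal{K}(A,g)}$. I would obtain this by approximating $A^{-1}$ in operator norm by polynomials in $A$: since $p(A)g\in\mathcal{K}(A,g)$ for every polynomial $p$, a sequence $p_n(A)\to A^{-1}$ would give $p_n(A)g\to A^{-1}g$ in norm and hence $A^{-1}g\in\overline{\mathcal{K}(A,g)}$. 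The map $z\mapsto z^{-1}$ is holomorphic on $\mathcal{W}$ (as $0\notin\overline{\mathcal{W}}$), and the hypothesis that $\mathbb{C}\setminus\mathcal{W}$ be connected is precisely what is needed to invoke Runge's theorem in its polynomial (pole-at-infinity) form, producing polynomials $p_n$ converging uniformly to $z^{-1}$ on a suitable compact neighbourhood of $\sigma(A)$. Passing through the holomorphic functional calculus, which is continuous from uniform convergence on a neighbourhood of $\sigma(A)$ to operator-norm convergence, then yields $p_n(A)\to A^{-1}$ in $\|\cdot\|_{\mathrm{op}}$, as desired.

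The delicate point --- and what I expect to be the main obstacle --- is the correct packaging of this approximation step: $\sigma(A)$ itself may fail to have connected complement (it could, e.g., be a circle), so polynomial Runge cannot be applied directly on $\sigma(A)$. The remedy is to pass to the polynomial hull $\widehat{\sigma(A)}$, namely $\sigma(A)$ together with the bounded components of its complement; one checks that the connectedness of $\mathbb{C}\setminus\mathcal{W}$ (which makes $\mathcal{W}$ free of holes) forces these bounded components, and hence $\widehat{\sigma(A)}$, to lie inside $\mathcal{W}$. Then $z^{-1}$ is holomorphic on a neighbourhood of the polynomially convex compact $\widehat{\sigma(A)}$, polynomial approximation is legitimate there, and a fortiori on $\sigma(A)$.

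For (ii) I would fix the \emph{same} $\mathcal{W}$ and show it witnesses $\mathscr{K}$-class membership of every nearby $A'$; since $0\notin\overline{\mathcal{W}}$ and the connectedness of $\mathbb{C}\setminus\mathcal{W}$ are properties of $\mathcal{W}$ alone, only the inclusion $\sigma(A')\subset\mathcal{W}$ needs checking. Because $\mathbb{C}\setminus\mathcal{W}$ is a closed subset of the resolvent set on which $(A-z)^{-1}\to 0$ as $|z|\to\infty$, the quantity $M:=\sup_{z\in\mathbb{C}\setminus\mathcal{W}}\|(A-z)^{-1}\|_{\mathrm{op}}$ is finite. For $z\in\mathbb{C}\setminus\mathcal{W}$ one writes $A'-z=(A-z)\big(\mathbbm{1}+(A-z)^{-1}(A'-A)\big)$, which is invertible as soon as $\|A'-A\|_{\mathrm{op}}<M^{-1}=:\varepsilon_A$; hence $\sigma(A')\subset\mathcal{W}$, $A'$ is $\mathscr{K}$-class, and part (i) applies to give the unique Krylov solution $f'=(A')^{-1}g$. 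Finally, (iii) is a quantitative resolvent estimate. From the identity $A^{-1}-(A')^{-1}=A^{-1}(A'-A)(A')^{-1}$ one gets $\|f-f'\|\le\|A^{-1}\|_{\mathrm{op}}\,\|A'-A\|_{\mathrm{op}}\,\|(A')^{-1}\|_{\mathrm{op}}\,\|g\|$, and under $\|A-A'\|_{\mathrm{op}}\le(2\|A^{-1}\|_{\mathrm{op}})^{-1}$ one has $\|A^{-1}(A'-A)\|_{\mathrm{op}}\le\tfrac12$, so the Neumann series for $(\mathbbm{1}+A^{-1}(A'-A))^{-1}$ converges with norm at most $2$; writing $(A')^{-1}=(\mathbbm{1}+A^{-1}(A'-A))^{-1}A^{-1}$ gives $\|(A')^{-1}\|_{\mathrm{op}}\le 2\|A^{-1}\|_{\mathrm{op}}$, and substitution yields exactly $\|f-f'\|\le 2\|g\|\,\|A^{-1}\|_{\mathrm{op}}^2\,\|A'-A\|_{\mathrm{op}}$.
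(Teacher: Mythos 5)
Your proof is correct, and its skeleton coincides with the paper's: polynomial approximation of $A^{-1}$ for (i), spectral stability inside the same witnessing set $\mathcal{W}$ for (ii), and the resolvent identity plus a Neumann-series bound $\|{A'}^{-1}\|_{\mathrm{op}}\leqslant 2\|A^{-1}\|_{\mathrm{op}}$ for (iii) (your (iii) is essentially verbatim the paper's computation). The difference is that you unfold the two black boxes the paper imports by citation. For (i) the paper simply invokes its earlier result [CMN 2019, Prop.~3.15], which asserts the existence of polynomials $p_n$ with $\|p_n(A)-A^{-1}\|_{\mathrm{op}}\to 0$ for any $\mathscr{K}$-class operator; you reprove this via Runge's theorem and the holomorphic functional calculus, and you correctly isolate the one genuine subtlety, namely that $\sigma(A)$ itself need not be polynomially convex, resolving it by passing to the polynomial hull $\widehat{\sigma(A)}$ and observing that the connected set $\mathbb{C}\setminus\mathcal{W}$, containing a neighbourhood of infinity, must lie in the unbounded component of $\mathbb{C}\setminus\sigma(A)$, so that $\widehat{\sigma(A)}\subset\mathcal{W}$ and $0\notin\widehat{\sigma(A)}$. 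For (ii) the paper cites upper semicontinuity of the spectrum (Halmos, Kato); you instead prove the needed instance directly via the uniform resolvent bound $M=\sup_{z\in\mathbb{C}\setminus\mathcal{W}}\|(A-z)^{-1}\|_{\mathrm{op}}<\infty$ and the factorisation $A'-z=(A-z)(\mathbbm{1}+(A-z)^{-1}(A'-A))$, which moreover yields an explicit $\varepsilon_A=M^{-1}$. What your route buys is self-containedness and an explicit radius of stability; what the paper's buys is brevity. The only point to polish if you write this out in full is the functional-calculus step: the operator-norm estimate $\|p_n(A)-A^{-1}\|_{\mathrm{op}}\lesssim\sup_{z\in\Gamma}|p_n(z)-z^{-1}|$ requires uniform convergence on a contour $\Gamma$ surrounding $\sigma(A)$, not merely on $\widehat{\sigma(A)}$, so you should apply Runge on a slightly enlarged polynomially convex compact set still avoiding $0$ and containing such a $\Gamma$; this is routine but should be said.
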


 Theorem \ref{thm:Kclass} addresses questions of type II from the general scheme of Section \ref{sec:questions}: it provides a framework where Krylov solvability is preserved under perturbations of the linear operator inducing the inverse problem. Indeed, an obvious consequence of Theorem \ref{thm:Kclass} is: if a sequence $(A_n)_{n\in\mathbb{N}}$ in $\mathcal{B}(\cH)$ satisfies $A_n\to A$ in operator norm for some $\mathscr{K}$-class operator $A$, then eventually in $n$ the $A_n$'s are all of $\mathscr{K}$-class, the associated inverse problems $A_nf_n=g$ are Krylov solvable with unique solution $f_n=A_n^{-1}g$, and moreover $f_n\to f$ in $\cH$, where $f=A^{-1}g$ is the unique and Krylov solution to $Af=g$.

 \begin{proof}[Proof of Theorem \ref{thm:Kclass}] As we proved in \cite[Prop.~3.15]{CMN-2018_Krylov-solvability-bdd} for all $\mathscr{K}$-class operators, there exists a polynomial sequence $(p_n)_{n\in\mathbb{N}}$, consisting of polynomials in the variable $z\in\mathbb{C}$, such that $\|p_n(A)-A^{-1}\|_{\mathrm{op}}\to 0$ as $n\to\infty$. Thus, the unique solution $f$ to $Af=g$ satisfies
 \[
  \|f-p_n(A)g\|\;=\;\|A^{-1}g-p_n(A)g\|\;\leqslant\;\|g\|\,\|p_n(A)-A^{-1}\|_{\mathrm{op}}\;\xrightarrow{n\to\infty}\;0\,,
 \]
 meaning that $f\in\overline{\mathcal{K}(A,g)}$. This proves part (i).

 Concerning (ii), we use the fact that $\sigma(A)$ is an upper semi-continuous function of $A\in\mathcal{B}(\cH)$ (see, e.g., \cite[Problem 103]{Halmos-HilbertSpaceBook} and \cite[Theorem IV.3.1 and Remark IV.3.3]{Kato-perturbation}), meaning that for every bounded open set $\Omega\subset\mathbb{C}$ with $\sigma(A)\subset\Omega$ there exists $\varepsilon_A>0$ such that if $A'\in\mathcal{B}(\cH)$ with $\|A'-A\|_{\mathrm{op}}<\varepsilon_A$, then $\sigma(A')\subset\Omega$. Applying this to $\Omega=\mathcal{W}$ we deduce that any such $A'$ is again of $\mathscr{K}$-class. The remaining part of the thesis then follows from (i).

 As for (iii), clearly
 \[
 \begin{split}
  \|f-f'\|\;&\leqslant\;\|g\|\,\|A^{-1}-{A'}^{-1}\|_{\mathrm{op}}\;=\;\|g\|\,\|{A'}^{-1}(A-A')A^{-1}\|_{\mathrm{op}} \\
  &\leqslant\;\|g\|\,\|{A'}^{-1}\|_{\mathrm{op}}\:\|A^{-1}\|_{\mathrm{op}}\:\|A-A'\|_{\mathrm{op}}\,,
 \end{split}
 \]
 and 
 \[
  {A'}^{-1}\;=\;A^{-1}\sum_{n=0}^\infty\big( (A-A')A^{-1}\big)^n\quad \textrm{when }\|A-A'\|_{\mathrm{op}}<\|A^{-1}\|_{\mathrm{op}}^{-1}\,,
 \]
 whence, when additionally $\|A-A'\|_{\mathrm{op}}\leqslant\big(2\,\|A^{-1}\|_{\mathrm{op}}\big)^{-1}$, $\|{A'}^{-1}\|_{\mathrm{op}}\leqslant 2\,\|A^{-1}\|_{\mathrm{op}}$. Plugging the latter inequality into the above estimate for $\|f-f'\|$ yields the conclusion. 
 \end{proof}

 \begin{remark}
  Such `elementary' proof of Theorem \ref{thm:Kclass} relies on a non-trivial toolbox, the above-mentioned result \cite[Prop.~3.15]{CMN-2018_Krylov-solvability-bdd}.  
 \end{remark}

 Theorem \ref{thm:Kclass} only scratches the surface of expectedly relevant features of $\mathscr{K}$-class operators, in view of the study of perturbations preserving Krylov solvability (see questions of type II in Sect.~\ref{sec:questions}).

 That the issue is non-trivial, however, is demonstrated by important difficulties that one soon encounters when trying to extend the scope of Theorem \ref{thm:Kclass}. Let us discuss here one point in particular: in the same spirit of questions of type II, it is natural to inquire whether $\mathscr{K}$-class operators allow to establish Krylov solvability in the limit when the perturbation is removed.

 To begin with, if a sequence $(A_n)_{n\in\mathbb{N}}$ of $\mathscr{K}$-class operators on $\cH$ converges in operator norm, the limit $A$ fails in general to be of $\mathscr{K}$-class. Indeed:
 
  \begin{lemma}\label{lem:Kclassnotclosed}
  The $\mathscr{K}$-class is not closed in $\mathcal{B}(\cH)$.
 \end{lemma}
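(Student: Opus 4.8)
The plan is to exhibit an explicit sequence $(A_n)_{n\in\mathbb{N}}$ of $\mathscr{K}$-class operators converging in operator norm to a limit $A$ that is \emph{not} of $\mathscr{K}$-class. Membership in the $\mathscr{K}$-class can fail for two qualitatively different reasons: either the spectrum touches $0$ (recall that every $\mathscr{K}$-class operator satisfies $\sigma(A)\subset\mathcal{W}$ with $0\notin\overline{\mathcal{W}}$, hence must be invertible with spectrum bounded away from $0$), or the spectrum encircles $0$ so that every admissible neighbourhood with connected complement is forced to contain $0$. For a one-line counterexample it is cleanest to target the first obstruction: it suffices to produce $\mathscr{K}$-class $A_n\to A$ in norm with $0\in\sigma(A)$.

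Concretely, I would work on $\cH=\ell^2(\mathbb{N})$ with canonical orthonormal basis $(e_k)_{k\in\mathbb{N}}$ and take the self-adjoint diagonal operator $A$ defined by $Ae_k=\tfrac{1}{k}e_k$, so that $\sigma(A)=\{\tfrac{1}{k}:k\in\mathbb{N}\}\cup\{0\}$ and in particular $0\in\sigma(A)$; since any bounded open $\mathcal{W}\supset\sigma(A)$ would then contain $0$ in its closure, $A$ cannot be of $\mathscr{K}$-class. As approximants I would freeze the tail: let $A_n$ be the diagonal operator with $A_ne_k=\tfrac{1}{k}e_k$ for $k\leqslant n$ and $A_ne_k=\tfrac{1}{n}e_k$ for $k>n$.

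Three short checks then complete the argument. First, $A_n-A$ is diagonal with entries $\tfrac{1}{n}-\tfrac{1}{k}$ for $k>n$ and $0$ otherwise, all lying in $[0,\tfrac{1}{n})$, whence $\|A_n-A\|_{\mathrm{op}}=\sup_{k>n}\big(\tfrac{1}{n}-\tfrac{1}{k}\big)=\tfrac{1}{n}\xrightarrow{n\to\infty}0$. Second, each $A_n$ has finite spectrum $\sigma(A_n)=\{1,\tfrac{1}{2},\dots,\tfrac{1}{n}\}\subset(0,\infty)$, a finite set bounded away from $0$; enclosing these $n$ points in sufficiently small pairwise disjoint open disks yields a bounded open $\mathcal{W}$ with $\sigma(A_n)\subset\mathcal{W}$, $0\notin\overline{\mathcal{W}}$, and $\mathbb{C}\setminus\mathcal{W}$ connected, so $A_n$ is of $\mathscr{K}$-class. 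Third, as already observed, $A$ is not. This exhibits the required sequence.

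There is essentially no hard step here, the construction being engineered precisely so that the approximants have manifestly good spectra. The one point deserving a moment's attention is the connectedness of $\mathbb{C}\setminus\mathcal{W}$ for the finitely-many-disks neighbourhood (the plane minus finitely many pairwise disjoint open disks is connected), since this is what guarantees genuine $\mathscr{K}$-class membership of the $A_n$ rather than mere invertibility. I would remark, finally, that this example realises only the simplest failure mode; a qualitatively richer counterexample, realising the second mode, would keep the $A_n$ invertible with connected-complement spectra while arranging $A$ so that $\sigma(A)$ accumulates on a curve encircling the origin, thereby forcing every admissible $\mathcal{W}$ to swallow $0$. The diagonal construction above is, however, entirely sufficient for the stated lemma.
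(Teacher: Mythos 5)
Your proposal is correct and follows essentially the same route as the paper: the paper also takes a positive compact operator with $0$ in its spectrum as the non-$\mathscr{K}$-class limit, approximated in operator norm by operators whose spectra are uniformly separated from zero (there via $A_n:=A+n^{-1}\mathbbm{1}$ rather than your tail-freezing of the diagonal). Your version is merely a concrete diagonal instance of the same mechanism, with all checks valid.
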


 \begin{proof}
  It suffices to consider a positive, compact operator $A:\cH\to\cH$, with zero in its spectrum, and its perturbations $A_n:=A+n^{-1}\mathbbm{1}$, $n\in\mathbb{N}$. Then each $A_n$ is of $\mathscr{K}$-class and $\|A_n-A\|_{\mathrm{op}}\to 0$ as $n\to\infty$, but by construction $A$ is not of $\mathscr{K}$-class.
 \end{proof}

 One might be misled to believe that the general mechanism for such failure is the appearance of zero in the spectrum of the limit operator $A$, and that therefore a uniform separation of $\sigma(A_n)$ from zero as $n\to\infty$ would produce a limit $A$ still in the $\mathscr{K}$-class. To show that this is not the case either, let us work out the following example.
 
 \begin{example}\label{ex:lid-convergence}
  Let $A$ and $A_n$, $n\in\mathbb{N}$, be the operators on the Hilbert space $L^2[0,1]$ defined by
  \[
   \begin{split}
    (Af)(x)\;&:=\;e^{2\pi\ii\,x}f(x)\,, \\
    (A_n f)(x)\;&:=\;
    \begin{cases}
     e^{2\pi \ii\,x}f(x)\,, & \textrm{ if }x\in(\frac{1}{2\pi n},1]\,, \\
     (1+\frac{1}{n})\, e^{2\pi \ii\,x}f(x)\,, & \textrm{ if }x\in[0,\frac{1}{2\pi n}]\,,
    \end{cases}
   \end{split}
  \]
 for $f\in L^2[0,1]$ and a.e.~$x\in [0,1]$. Clearly, 
 \[
  \|A\|_{\mathrm{op}}\,=\,1\,,\qquad \|A_n\|_{\mathrm{op}}\,=\,1+\frac{1}{n}\,,\qquad \|A-A_n\|_{\mathrm{op}}\,=\,\frac{1}{n}\, \|A\|_{\mathrm{op}}\;\xrightarrow{n\to\infty}\;0\,.
 \]
 Moreover, each $A_n$ is a $\mathscr{K}$-class operator: its spectrum $\sigma(A_n)$ covers the unit circle except for a `lid' arc, corresponding to the angle $x\in[0,\frac{1}{n}]$, which lies on the circle of larger radius $1+\frac{1}{n}$, therefore it is possible to include $\sigma(A_n)$ into a suitable bounded open $\mathcal{W}$ separated from zero and with connected complement in $\mathbb{C}$. In the limit $n\to\infty$ the `lid' closes the unit circle: $\sigma(A)$ is indeed the whole unit circle. Thus, even if the limit operator $A$ satisfies $0\notin\sigma(A)$, $A$ fails to belong to the $\mathscr{K}$-class.
 In addition, not only the $\mathscr{K}$-class condition is lost in the limit, but so too the Krylov solvability. Indeed, by means of the Hilbert space isomorphism
 \[
  L^2[0,1]\,\xrightarrow{\;\cong\;}\,\ell^2(\mathbbm{Z})\,,\qquad e^{2\pi\ii k x}\,\longmapsto e_k
 \]
 (namely with respect to the orthonormal bases $(e^{2\pi\ii k x})_{k \in \mathbbm{Z}}$ and $(e_k)_{k\in\mathbb{Z}}$), $A$ is unitarily equivalent to the right-shift operator on $\ell^2(\mathbbm{Z})$: thus, any choice $g \cong e_k$ for some $k\in\mathbb{Z}$ produces a non-Krylov solvable inverse problem $Af=g$.
 \end{example}

 In conclusion, the $\mathscr{K}$-class proves to be an informative sub-class of operators that is very robust and preserves Krylov solvability under perturbations of an unperturbed $\mathscr{K}$-class inverse problem (Theorem \ref{thm:Kclass}), but on the contrary is very fragile when from a sequence of approximating inverse problems of $\mathscr{K}$-class one wants to extract information on the Krylov solvability of the limit problem (Lemma \ref{lem:Kclassnotclosed}, Example \ref{ex:lid-convergence}).

 \section{Weak gap metric for weakly closed parts of the unit ball}\label{sec:weakgapmetric}
 
%
 
 Let us introduce now a convenient indicator of vicinity of closed subspaces of a given Hilbert space, which turns out to possess convenient properties when comparing (closures of) Krylov subspaces, and to provide a rigorous language to express and control limits of the form $\overline{\mathcal{K}(A_n,g_n)}\to\overline{\mathcal{K}(A,g)}$. Even though such an indicator is not optimal, in that it lacks other desired properties that would make it fully informative, we discuss it in depth here as a first attempt towards an efficient measurement of vicinity and convergence of Krylov subspaces under perturbations.

 One natural motivation is provided by the \emph{failure} of describing the intuitive convergence $\mathcal{K}_N(A,g)\xrightarrow{N\to\infty}\overline{\mathcal{K}(A,g)}$, where 
 \begin{equation}\label{eq:defKrylov-N}
  \mathcal{K}_N(A,g) \;:=\;  \mathrm{span} \{g,Ag,\dots,A^{N-1}g\}\,,\qquad N\in\mathbb{N}
\end{equation}
 is the $N$-th order Krylov subspace, by means of the ordinary `\emph{gap metric}' between closed subspaces of the underlying Hilbert space.

 Let us recall (see, e.g., \cite[Chapt.~4, \S 2]{Kato-perturbation}) that given a Hilbert space $\cH$ and two closed subspaces $U,V\subset\cH$, the `\emph{gap}' and the `\emph{gap distance}' between them are, respectively, the quantities
 \begin{equation}
  \widehat{\delta}(U,V)\;:=\;\max\{\delta(U,V),\delta(V,U)\}
 \end{equation}
 and 
 \begin{equation}\label{eq:dhat}
  \widehat{d}(U,V)\;:=\;\max\{d(U,V),d(V,U)\}\,,
 \end{equation}
 where
 \begin{equation}\label{eq:dhat2}
  \begin{split}
    \delta(U,V)\;&:=\;\sup_{\substack{ u\in U \\ \|u\|=1 }}\inf_{\substack{ v\in V}}\|u-v\|\,, \\
    d(U,V)\;&:=\;\sup_{\substack{ u\in U \\ \|u\|=1 }}\inf_{\substack{ v\in V \\ \|v\|=1 }}\|u-v\|\,,  
  \end{split}
 \end{equation}
 and with the tacit definitions $\delta(\{0\},V):=0$, $d(\{0\},V):=0$, $d(U,\{0\}):=2$ for $U\neq\{0\}$, when one of the two entries is the empty set.
 The short-hands $B_\cH$ for the closed unit ball of $\cH$, $S_\cH$ for the closed unit sphere, $B_U:=U\cap B_\cH$, $S_U:=U\cap S_\cH$, and $\mathrm{dist}(x,C)$ for the norm distance of a point $x\in\cH$ from the closed subset $C\subset\cH$ will be used throughout. Thus,
 \begin{equation}
  \delta(U,V)\;=\;\sup_{u\in S_U}\mathrm{dist}(u,V)\,,\qquad d(U,V)\;=\;\sup_{u\in S_U}\mathrm{dist}(u,S_V)\,.
 \end{equation}
 As a matter of fact, on the set of all closed subspaces of $\cH$ both $\widehat{\delta}$ and $\widehat{d}$ are two equivalent metrics, with 
  \begin{equation}\label{eq:delta-d}
  \widehat{\delta}(U,V)\;\leqslant\; \widehat{d}(U,V)\;\leqslant\;2\widehat{\delta}(U,V)\,,
 \end{equation}
 and the resulting metric space is complete \cite{Gohberg-Markus-1959}.

 The construction that we recalled here is for the Hilbert space setting and was introduced first in 
 \cite{Krein-Krasnoselskii-1947} as `opening' between (closed) subspaces (i.e., the operator norm distance between their orthogonal projections). It also applies to the more general case when $\cH$ is a Banach space, a generalisation originally discussed in 
 in \cite{Krein-Krasnoselskii-Milman-1948}, and 
 \cite[\S 34]{Akhiezer-Glazman-1961-1993} (except that in the non-Hilbert case the gap $\widehat{\delta}$ is not a metric, even though it still satisfies \eqref{eq:delta-d} and hence induces the same topology as the metric $\widehat{d}$). Let us also recall that by linearity the closedness of the above subspaces $U$ and $V$ can be equivalently formulated in the norm or in the weak topology of $\cH$.

 Now, given $A\in\mathcal{B}(\cH)$ and $g\in\cH$, for the closed subspaces $\mathcal{K}:=\overline{\mathcal{K}(A,g)}$ and $\mathcal{K}_N:=\mathcal{K}_N(A,g)$, $N\in\mathbb{N}$, of $\cH$ one obviously has $\mathcal{K}_N\subset \mathcal{K}$ and hence $\delta(\mathcal{K}_N,\mathcal{K})=0$; on the other hand, if $\dim\mathcal{K}=\infty$, one can find for every $N$ a vector $u\in S_{\mathcal{K}}$ such that $u\perp \mathcal{K}_N$, thus with $\mathrm{dist}(u,\mathcal{K}_N)=1$, and hence $\delta(\mathcal{K},\mathcal{K}_N)\geqslant 1$. This shows that $\widehat{d}(\mathcal{K}_N,\mathcal{K})\geqslant \widehat{\delta}(\mathcal{K}_N,\mathcal{K})\geqslant 1$, therefore the sequence $(\mathcal{K}_N)_{N\in\mathbb{N}}$ fails to converge to $\mathcal{K}$ in the $\widehat{d}$-metric. In this respect, the $\widehat{d}$-metric is certainly not a convenient tool to monitor the vicinity of Krylov subspaces, for it cannot accommodate the most intuitive convergence $\mathcal{K}_N\to\mathcal{K}$.

 With this observation in mind, it is natural to weaken the ordinary gap distance $\widehat{d}$-metric so as to encompass a larger class of limits. To do so, we exploit the fact (see, e.g., \cite[Theorem 3.29]{Brezis-FA-Sob-PDE}) that in any separable Hilbert space $\cH$ 
 the norm-closed unit ball $B_\cH$ is metrisable in the Hilbert space weak topology. More precisely, there exists a norm $\|\cdot\|_w$ on $\cH$ (and hence a metric $\varrho_w(x,y):=\|x-y\|_w$) such that $\|x\|_w\leqslant\|x\|$ and whose metric topology restricted to $B_\cH$ is precisely the Hilbert space weak topology. For concreteness one may define
 \[
  \|x\|_w\;:=\;\sum_{n=1}^\infty \frac{1}{\:2^n}|\langle \xi_n,x\rangle|
 \]
 for a dense countable collection $(\xi_n)_{n\in\mathbb{N}}$ in $B_\cH$ which identifies the norm $\|\cdot\|_w$. On the other hand, since a Hilbert space is reflexive, $B_\cH$ is compact in the weak topology (see, e.g., \cite[Theorem 3.16]{Brezis-FA-Sob-PDE}), and hence in the $\varrho_w$-metric. Being $(B_\cH,\varrho_w)$ a metric space, its compactness is equivalent to the property of being simultaneously complete and totally bounded (see, e.g., \cite[Theorem 45.1]{Munkres-Topology}). In conclusion, the metric space $(B_\cH,\varrho_w)$ is compact and complete, and its metric topology is the Hilbert space weak topology (restricted to $B_\cH$). In fact, the construction that follows, including Theorem \ref{thm:weak-gap-properties-and-completeness-on-ball} below, is applicable to the more general case where $\cH$ is a reflexive Banach space with separable dual: indeed, the same properties above for $(B_\cH,\varrho_w)$ hold.

 In $(B_\cH,\varrho_w)$ we denote the relative \emph{weakly open} balls (namely the $\varrho_w$-\emph{open} balls of $\cH$ intersected with $B_\cH$) as 
 \begin{equation}
  \mathfrak{B}_w(x_0,\varepsilon)\;:=\;\{x\in B_\cH\,|\,\|x-x_0\|_w<\varepsilon\}
 \end{equation}
 for given $x_0\in B_\cH$ and $\varepsilon>0$. 
 Observe that any such open ball $\mathfrak{B}_w(x_0,\varepsilon)$ always contains points of the unit sphere (not all, if $\varepsilon$ is small enough); thus, at fixed $x_0\in B_\cH$, and along a sequence of radii $\varepsilon_n\downarrow 0$, one can select a sequence $(y_n)_{n\in\mathbb{N}}$ with $\|y_n\|=1$ and $\|y_n-x_0\|_w<\varepsilon_n$, whence the conclusion $y_n\xrightarrow{\varrho_w}x_0$, which reproduces, in the metric space language, the topological statement that $y_n\rightharpoonup x_0$, i.e., that the unit ball is the weak closure of the unit sphere.

 Based on the weak (and metric) topology $B_\cH$ it is natural to weaken the gap distance $\widehat{d}$ considered before, as we shall do in a moment, except that dealing now with weak limits instead of norm limits one has to expect possible ``discontinuous jumps'', say, in the form of sudden expansions or contractions of the limit object as compared to its approximants (in the same spirit of taking the closure of the unit sphere $S_\cH$: the norm-closure gives again $S_\cH$, the weak closure gives the whole $B_\cH$). For this reason we set up the new notion of weak gap-metric in the more general class
 \begin{equation}
  \mathcal{C}_w(\cH)\;:=\;\{\textrm{ non-empty and weakly closed subsets of $B_\cH$ }\}\,,
 \end{equation}
 instead of the subclass of unit balls of closed subspaces of $\cH$.

 For $U,V\in\mathcal{C}_w(\cH)$ let us then set 
 \begin{equation}\label{eq:def-dw-in-ball}
  \begin{split}
   d_w(U,V)\;&:=\;\sup_{u\in U}\inf_{v\in V}\|u-v\|_w\,,  \\ 
   \widehat{d}_w(U,V)\;&:=\;\max\{ d_w(U,V) , d_w(V,U) \}\,.
  \end{split}
 \end{equation}

 We shall now establish the fundamental properties of the map $\widehat{d}_w$. They are summarised as follows.

 \begin{theorem}\label{thm:weak-gap-properties-and-completeness-on-ball}
  Let $\cH$ be a separable Hilbert space.
  \begin{itemize}
   \item[(i)] $\widehat{d}_w$ is a metric on $\mathcal{C}_w(\cH)$.
   \item[(ii)] The metric space $(\mathcal{C}_w(\cH),\widehat{d}_w)$ is complete.
   \item[(iii)] If $\widehat{d}_w(U_n,U)\xrightarrow{n\to\infty} 0$ for an element $U$ and a sequence $(U_n)_{n\in\mathbb{N}}$ in $\mathcal{C}_w(\cH)$, then
   \begin{equation}\label{eq:limit-U}
    U\;=\;\{u\in B_{\cH}\,|\,u_n\rightharpoonup u\textrm{ for a sequence }(u_n)_{n\in\mathbb{N}} \textrm{ with } u_n\in U_n\}\,.
   \end{equation}
   \item[(iv)] The metric space $(\mathcal{C}_w(\cH),\widehat{d}_w)$ is compact.
   \item[(v)] If $\widehat{d}_w(U_n,U)\xrightarrow{n\to\infty} 0$ for an element $U$ and a sequence $(U_n)_{n\in\mathbb{N}}$ in $\mathcal{C}_w(\cH)$, then $\widehat{d}_w(f(U_n),f(U))\xrightarrow{n\to\infty} 0$ for any weakly closed and weakly continuous map $f:\cH\to\cH$ such that $f(B_\cH)\subset B_\cH$.
  \end{itemize}
 \end{theorem}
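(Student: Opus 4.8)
The plan is to recognise the entire structure as a classical one. Since the weak topology on $B_\cH$ coincides with the topology of the metric $\varrho_w$, and $(B_\cH,\varrho_w)$ is compact, a subset of $B_\cH$ is weakly closed precisely when it is $\varrho_w$-closed, equivalently $\varrho_w$-compact. Hence $\mathcal{C}_w(\cH)$ is exactly the family of non-empty compact subsets of the compact metric space $(B_\cH,\varrho_w)$, and, comparing \eqref{eq:def-dw-in-ball} with the usual definition of Hausdorff distance, $\widehat{d}_w$ is precisely the Hausdorff metric induced by $\varrho_w$ on that family. With this identification in hand, parts (i), (ii) and (iv) become the classical statements that the Hausdorff distance over the non-empty closed subsets of a metric space is a genuine metric (the implication $\widehat{d}_w(U,V)=0\Rightarrow U=V$ using exactly the closedness of $U$ and $V$), that it inherits completeness from the ambient space, and that it inherits compactness through a Blaschke-type selection argument. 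I would recall these facts and refer to the standard theory rather than reprove them; note that (ii) is in any case subsumed by the compactness in (iv).

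For (iii), I would argue by hand, using that compactness of $U$ and of each $U_n$ makes every infimum in \eqref{eq:def-dw-in-ball} attained, and that on bounded sets $\varrho_w$-convergence is the same as weak convergence. For the inclusion $\supseteq$, take $u_n\in U_n$ with $u_n\rightharpoonup u$ and choose $v_n\in U$ realising $\mathrm{dist}_w(u_n,U):=\inf_{v\in U}\|u_n-v\|_w\leqslant d_w(U_n,U)\to 0$; then $v_n\xrightarrow{\varrho_w}u$, and closedness of $U$ forces $u\in U$. For $\subseteq$, take $u\in U$ and choose $u_n\in U_n$ with $\|u-u_n\|_w=\mathrm{dist}_w(u,U_n)\leqslant d_w(U,U_n)\to 0$; then $u_n\in U_n$ and $u_n\rightharpoonup u$, exhibiting $u$ as an element of the right-hand side of \eqref{eq:limit-U}.

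For (v), the strategy is to combine the compactness (iv) with the characterisation (iii) and the weak continuity of $f$. First, $f(U_n)$ and $f(U)$ lie in $\mathcal{C}_w(\cH)$: they are non-empty, contained in $f(B_\cH)\subset B_\cH$, and weakly closed because $f$ is a weakly closed map. Since $(\mathcal{C}_w(\cH),\widehat{d}_w)$ is a compact metric space, it suffices to show that every $\widehat{d}_w$-convergent subsequence $f(U_{n_k})\to W$ satisfies $W=f(U)$; the convergence of the whole sequence then follows from the standard fact that a sequence in a compact metric space all of whose convergent subsequences share one limit converges to that limit. To identify $W$ I use (iii) twice. If $w\in W$, write $w_k=f(u_k)$ with $u_k\in U_{n_k}$ and $w_k\rightharpoonup w$; extracting a further subsequence along which $u_k\rightharpoonup u$ (possible by weak compactness of $B_\cH$) gives $u\in U$ by (iii) applied to the subsequence $U_{n_k}\to U$, and weak continuity yields $f(u_k)\rightharpoonup f(u)$, so $w=f(u)\in f(U)$. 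Conversely, if $w=f(u)$ with $u\in U$, then (iii) furnishes $u_k\in U_{n_k}$ with $u_k\rightharpoonup u$, and weak continuity gives $f(u_k)\rightharpoonup f(u)=w$ with $f(u_k)\in f(U_{n_k})$, so $w\in W$ by (iii) applied to $f(U_{n_k})\to W$. Thus $W=f(U)$.

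I expect the genuine work to be concentrated in (v). The conceptual content of (i), (ii) and (iv) is entirely absorbed by the identification with the Hausdorff metric on the compact metric space $(B_\cH,\varrho_w)$, after which they become citations. The delicate point will be the double subsequence extraction in (v) together with the fact that $f$ is only assumed weakly continuous and weakly closed, \emph{not} norm continuous: it is exactly these hypotheses that keep the images inside $\mathcal{C}_w(\cH)$ and guarantee $f(u_k)\rightharpoonup f(u)$, and one must keep the roles of (iii) and (iv) cleanly separated so as to avoid a circular argument.
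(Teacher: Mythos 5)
Your proposal is correct, but it takes a genuinely different route from the paper's in several places, and the comparison is worth recording. For (i), (ii) and (iv) you identify $(\mathcal{C}_w(\cH),\widehat{d}_w)$ as the hyperspace of non-empty compact subsets of the compact metric space $(B_\cH,\varrho_w)$ with its Hausdorff metric and invoke the classical theory; the paper explicitly acknowledges (in the Remark following the theorem) that this is legitimate, but deliberately reproves everything from scratch -- completeness via an explicit construction of ``originating sequences'' with geometrically decaying increments, and compactness via total boundedness using finite weak $\varepsilon$-covers of $B_\cH$ together with Lemma \ref{lem:38} -- because those concrete arguments are reused later (e.g.\ in Lemmas \ref{lem:approx-le-1}--\ref{lem:gntogKnCauchy}). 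For (iii) your proof is cleaner than the paper's: you exploit that $U$ and each $U_n$ are $\varrho_w$-compact so every infimum in \eqref{eq:def-dw-in-ball} is attained, and deduce both inclusions in \eqref{eq:limit-U} in two lines; the paper instead obtains \eqref{eq:limit-U} as a by-product of the completeness construction (identifying the limit set as the weak closure of the set $U_\infty$ of limits of originating sequences), which is considerably longer. For (v) the divergence is greatest: the paper gives a hands-on proof, controlling $d_w(f(U_n),f(U))$ via preimages of weakly open $\varepsilon$-expansions and establishing $d_w(f(U),f(U_n))\to 0$ by contradiction, whereas you run a soft argument -- compactness of the hyperspace reduces everything to identifying subsequential limits, and the sequential characterisation (iii) plus weak continuity of $f$ and weak compactness of $B_\cH$ pins each such limit down as $f(U)$. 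Your argument buys brevity and conceptual clarity (it makes transparent exactly where weak continuity and weak closedness of $f$ enter), at the cost of leaning on (iii) and (iv) as black boxes and on external citations for the Hausdorff-metric facts; the paper's version buys self-containedness and a toolkit of quantitative lemmas used downstream. One small point to tidy: in the first half of your (v), after extracting $u_{k_j}\rightharpoonup u$, you should apply (iii) to the sub-subsequence $(U_{n_{k_j}})_j$ (which still $\widehat{d}_w$-converges to $U$) rather than to $(U_{n_k})_k$, since the characterisation \eqref{eq:limit-U} requires a selection from \emph{every} term of the sequence being considered; this is immediate but should be said.
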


  We shall also write $U_n\xrightarrow{\widehat{d}_w}U$ as an alternative to $\widehat{d}_w(U_n,U)\to 0$.

  \begin{remark}
 The completeness and the compactness result of Theorem \ref{thm:weak-gap-properties-and-completeness-on-ball} are in a sense folk knowledge in the context of the Hausdorff distance. In fact, the gap distance $\widehat{d}(U,V)$ introduced in \eqref{eq:dhat}-\eqref{eq:dhat2} is, apart from zero-sets, the Hausdorff distance between $U$ and $V$ as subsets of the metric (normed) space $(\cH,\|\cdot\|)$, and our modified weak gap distance $\widehat{d}_w(U,V)$ defined in \eqref{eq:def-dw-in-ball} between elements of $\mathcal{C}_w(\cH)$ is the Hausdorff distance between sets in the metric space $(B_H,\varrho_w)$. The completeness and the compactness of $(B_H,\varrho_w)$ then lift, separately, to the completeness and compactness of $(\mathcal{C}_w(\cH),\widehat{d}_w)$ -- they are actually equivalent (see, e.g., \cite[Theorem 5.38]{Tuzhilin-2020}. We chose to present here both results and their proofs in detail for three important reasons. First, we wanted to make the discussion self-consistent (also in view of the rather miscellaneous literature we could track down, our proof of completeness, in particular, following an independent route than the general discussion \cite{Henrikson-1999,Gupta-Mukherjee-2019,Tuzhilin-2020}). Second, we intended to expose reasonings, tailored on the weak topology setting, which we shall use repeatedly in the proof of the various statements of the following Sections. Third, having the proof of completeness of $(\mathcal{C}_w(\cH),\widehat{d}_w)$ fully laid down is of further help in understanding the \emph{failure} of completeness of the metric space $(\mathscr{S}(\cH),\widehat{d}_w)$ that we will consider in the next 
Section, for applications to Krylov subspaces. 
\end{remark}

  There are further technical properties of $d_w$ and $\widehat{d}_w$ that are worth being singled out. Let us collect them in the following lemmas.
  
  \begin{lemma}\label{lem:inclusion-and-triangular}
   Let $U,V,Z\in \mathcal{C}_w(\cH)$ for some separable Hilbert space $\cH$. Then 
   \begin{eqnarray}
    d_w(U,V)\,=\,0 &\Leftrightarrow& U\subset V \,, \label{eq:dwinclusion} \\
    \widehat{d}_w (U,V)\,=\,0 &\Leftrightarrow& U= V \,, \label{eq:dhatzero}\\
    d_w(U,Z)&\leqslant& d_w(U,V)+ d_w(V,Z)\,, \label{eq:d-triangular} \\
    \widehat{d}_w(U,Z)&\leqslant&\widehat{d}_w(U,V)+\widehat{d}_w(V,Z)\,. \label{eq:dhat-triangular}
   \end{eqnarray}
  \end{lemma}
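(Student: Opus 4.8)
The plan is to treat all four claims as the standard behaviour of a Hausdorff-type distance -- here the one induced on $\mathcal{C}_w(\cH)$ by the metric $\varrho_w$ on $B_\cH$ -- and to argue them by hand so that the role of weak closedness is made explicit. I would begin with \eqref{eq:dwinclusion}. For the easy implication, if $U\subset V$ then for each $u\in U$ the choice $v=u$ gives $\inf_{v\in V}\|u-v\|_w=0$, whence $d_w(U,V)=0$ at once. For the converse, $d_w(U,V)=0$ forces $\inf_{v\in V}\|u-v\|_w=0$ for every $u\in U$; since the $\varrho_w$-metric topology on $B_\cH$ coincides with the weak topology and $V$ is weakly closed in $B_\cH$, the set $V$ is $\varrho_w$-closed, and in a metric space a point at zero distance from a closed set belongs to it, so $u\in V$, giving $U\subset V$. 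Claim \eqref{eq:dhatzero} is then immediate: $\widehat{d}_w(U,V)=0$ is equivalent to $d_w(U,V)=0$ and $d_w(V,U)=0$, i.e.\ to $U\subset V$ and $V\subset U$ by \eqref{eq:dwinclusion}, i.e.\ to $U=V$.

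For the triangle inequality \eqref{eq:d-triangular} I would exploit the ordinary triangle inequality for the norm $\|\cdot\|_w$. Fixing $u\in U$ and $v\in V$, for every $z\in Z$ one has $\|u-z\|_w\leqslant\|u-v\|_w+\|v-z\|_w$, and taking the infimum over $z\in Z$ gives $\inf_{z\in Z}\|u-z\|_w\leqslant\|u-v\|_w+\inf_{z\in Z}\|v-z\|_w\leqslant\|u-v\|_w+d_w(V,Z)$. The left-hand side is independent of $v$, so the infimum over $v\in V$ yields $\inf_{z\in Z}\|u-z\|_w\leqslant\inf_{v\in V}\|u-v\|_w+d_w(V,Z)\leqslant d_w(U,V)+d_w(V,Z)$, and the supremum over $u\in U$ produces \eqref{eq:d-triangular}. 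The inequality \eqref{eq:dhat-triangular} then follows by applying \eqref{eq:d-triangular} in both orderings, $d_w(U,Z)\leqslant d_w(U,V)+d_w(V,Z)\leqslant\widehat{d}_w(U,V)+\widehat{d}_w(V,Z)$ and symmetrically $d_w(Z,U)\leqslant d_w(Z,V)+d_w(V,U)\leqslant\widehat{d}_w(U,V)+\widehat{d}_w(V,Z)$, and taking the maximum of the two.

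There is no deep obstacle here; the only place where the defining hypotheses of $\mathcal{C}_w(\cH)$ genuinely enter is the forward direction of \eqref{eq:dwinclusion}, where weak closedness of $V$ -- equivalently, $\varrho_w$-closedness inside $B_\cH$ -- is exactly what converts ``zero $\varrho_w$-distance'' into ``membership''. This is where I would be careful to invoke the metrisability statement established above, that $\varrho_w$ restricted to $B_\cH$ induces the weak topology, rather than argue with weak nets directly. Non-emptiness of the sets guarantees that the inner infima are taken over nonempty sets, so all quantities are well defined and the tacit edge conventions attached to the earlier $\delta,d$ are not needed; and I would note that $\|\cdot\|_w$ being a genuine norm (not merely a seminorm) is what legitimises the pointwise triangle inequality used above. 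The compactness of $(B_\cH,\varrho_w)$ additionally makes each inner infimum attained, which is not required here but will be convenient in the later statements.
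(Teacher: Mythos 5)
Your proof is correct and follows essentially the same route as the paper's: the forward direction of \eqref{eq:dwinclusion} via weak ($\varrho_w$-) closedness of $V$, \eqref{eq:dhatzero} from the two inclusions, and the Hausdorff-type triangle inequalities from the pointwise triangle inequality for $\|\cdot\|_w$. The only cosmetic difference is that you infimise over $v\in V$ directly instead of invoking weak compactness of $V$ to select a minimising $v_0$ as the paper does, which if anything slightly streamlines the argument.
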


 \begin{lemma}\label{lem:weaklimits-closed}
  Given a separable Hilbert space $\cH$ and a collection $(U_n)_{n\in\mathbb{N}}$ in $\mathcal{C}_w(\cH)$, the set 
  \begin{equation}
   \mathcal{U}\;:=\;\{x\in B_\cH\,|\,u_n\rightharpoonup x\textrm{ for a sequence }(u_n)_{n\in\mathbb{N}} \textrm{ with } u_n\in U_n\}
  \end{equation}
  is closed in the weak topology of $\cH$.
 \end{lemma}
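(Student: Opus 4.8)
The plan is to reduce the weak closedness of $\mathcal{U}$ to sequential closedness in the metric $\varrho_w$, and then to produce the required limiting sequence by a diagonal extraction. First I would observe that $\mathcal{U}\subset B_\cH$ and that $B_\cH$ is itself weakly closed in $\cH$ (by reflexivity), so it suffices to prove that $\mathcal{U}$ is closed in the relative weak topology of $B_\cH$. Since that relative topology coincides with the topology of the metric $\varrho_w$, and since in a metric space closedness is equivalent to sequential closedness, the whole statement reduces to the following: whenever $(x^{(k)})_{k\in\mathbb{N}}$ is a sequence in $\mathcal{U}$ with $x^{(k)}\xrightarrow{\varrho_w}x$ for some $x\in B_\cH$, then $x\in\mathcal{U}$. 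Here I am using that for points of $B_\cH$ the convergence $\varrho_w(\cdot,\cdot)\to 0$ is precisely weak convergence $\rightharpoonup$.

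Next I would unpack the definition of $\mathcal{U}$ at each level $k$. Because $x^{(k)}\in\mathcal{U}$, there is a sequence $(u^{(k)}_n)_{n\in\mathbb{N}}$ with $u^{(k)}_n\in U_n$ for every $n$ and $u^{(k)}_n\xrightarrow{\varrho_w}x^{(k)}$ as $n\to\infty$. The goal is to manufacture a single sequence $(u_n)_{n\in\mathbb{N}}$, again with $u_n\in U_n$, that converges weakly to $x$. Note that each $U_n$ is non-empty (as an element of $\mathcal{C}_w(\cH)$), so there is never an obstruction to choosing an element of $U_n$.

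The core of the argument is then the diagonal selection. I would pick a strictly increasing sequence of indices $N_1<N_2<\cdots$ such that $\varrho_w(u^{(k)}_n,x^{(k)})<1/k$ for all $n\geqslant N_k$, which is possible by the inner convergence for each fixed $k$. Define $u_n:=u^{(k)}_n$ whenever $N_k\leqslant n<N_{k+1}$, and let $u_n$ be an arbitrary element of $U_n$ for $n<N_1$. Then $u_n\in U_n$ for every $n$, and for $N_k\leqslant n<N_{k+1}$ the triangle inequality gives
\[
 \varrho_w(u_n,x)\;\leqslant\;\varrho_w(u^{(k)}_n,x^{(k)})+\varrho_w(x^{(k)},x)\;<\;\tfrac{1}{k}+\varrho_w(x^{(k)},x)\,.
\]
Since $n\to\infty$ forces $k\to\infty$, both terms on the right tend to $0$, so $\varrho_w(u_n,x)\to 0$, i.e.\ $u_n\rightharpoonup x$; hence $x\in\mathcal{U}$, as desired.

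The step I expect to require the most care is the interlacing of the two limits in the diagonal extraction: one must ensure simultaneously that the single selected sequence lands in the correct $U_n$ at each index while converging to $x$. This is exactly what the strict monotonicity of $(N_k)$ together with the triangle-inequality bound above guarantees, but the bookkeeping — matching the running index $n$ to the level $k$ that controls it — is where the argument must be written precisely. Everything else (the reduction to sequences via metrizability, and the non-emptiness of the $U_n$) is routine.
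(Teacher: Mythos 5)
Your proposal is correct and follows essentially the same route as the paper: reduce weak closedness to $\varrho_w$-sequential closedness on $B_\cH$, then build the limiting sequence $(u_n)$ by a block-wise diagonal extraction, assigning $u_n:=u_n^{(k)}$ on $\{N_k,\dots,N_{k+1}-1\}$ and controlling $\varrho_w(u_n,x)$ by the triangle inequality. The only cosmetic difference is that the paper chooses the approximants $x_k\in\mathcal{U}$ with $\|x-x_k\|_w\leqslant\frac{1}{2k}$ up front so both error terms are bounded by $\frac{1}{2k}$, whereas you keep $\varrho_w(x^{(k)},x)$ as a separate vanishing term; this changes nothing of substance.
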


 Given $U\in \mathcal{C}_w(\cH)$ we define its `\emph{weakly open $\varepsilon$-expansion}' in $B_\cH$ as 
 \begin{equation}\label{eq:epsexp}
  U(\varepsilon)\;:=\;\bigcup_{u\in U}\mathfrak{B}_w(u,\varepsilon)\,.
 \end{equation}
 Observe that $U(\varepsilon)$ is a weakly open subset of $B_\cH$.

 \begin{lemma}\label{lem:38}
  Let $U,V\in \mathcal{C}_w(\cH)$ for some separable Hilbert space $\cH$ and let $\varepsilon>0$. Then:
  \begin{itemize}
   \item[(i)] $d_w(U,V)<\varepsilon$ $\Rightarrow$ $V\cap \mathfrak{B}_w(u,\varepsilon)\neq\emptyset$ $\forall u\in U$; 
   \item[(ii)] $d_w(U,V)<\varepsilon$ $\Leftrightarrow$ $U\subset V(\varepsilon)$;
   \item[(iii)] $\left.\begin{array}{r} U\subset V(\varepsilon) \\ U\cap \mathfrak{B}_w(v,\varepsilon)\neq\emptyset\;\forall v\in V\end{array}\!\right\}$ $\Rightarrow$ $ \widehat{d}_w (V,U)<\varepsilon$.
  \end{itemize}
 \end{lemma}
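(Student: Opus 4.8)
The plan is to handle the three items by isolating the one genuinely non-trivial point—a compactness argument converting pointwise estimates into a uniform supremum bound—from the remaining parts, which follow by merely unwinding the definitions of $d_w$ and $\mathfrak{B}_w(\cdot,\cdot)$.

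First I would dispose of (i) together with the forward implication of (ii) in one stroke. If $d_w(U,V)<\varepsilon$, then by definition of the supremum every $u\in U$ satisfies $\inf_{v\in V}\|u-v\|_w\leqslant d_w(U,V)<\varepsilon$, and by definition of the infimum there is $v\in V$ with $\|u-v\|_w<\varepsilon$. This $v$ lies in $V\cap\mathfrak{B}_w(u,\varepsilon)$, which proves (i); and since $\varrho_w$ is symmetric the very same $v$ witnesses $u\in\mathfrak{B}_w(v,\varepsilon)\subset V(\varepsilon)$, giving $U\subset V(\varepsilon)$ and hence the forward direction of (ii).

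The converse implication of (ii) is the crux. Assuming $U\subset V(\varepsilon)$, the same unwinding yields, for each individual $u\in U$, a strict bound $\inf_{v\in V}\|u-v\|_w<\varepsilon$; the danger is that passing to $\sup_{u\in U}$ could destroy strictness and leave only $d_w(U,V)\leqslant\varepsilon$. To exclude this I would invoke weak compactness. Since $U$ is weakly closed and contained in $B_\cH$, and the weak topology restricted to $B_\cH$ coincides with the $\varrho_w$-metric topology, with $(B_\cH,\varrho_w)$ compact (as recalled before Theorem \ref{thm:weak-gap-properties-and-completeness-on-ball}), the set $U$ is compact in $(B_\cH,\varrho_w)$. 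The map $u\mapsto\inf_{v\in V}\|u-v\|_w$ is an infimum of the $1$-Lipschitz functions $u\mapsto\|u-v\|_w$, hence itself $1$-Lipschitz and $\varrho_w$-continuous, so it attains its maximum over the compact set $U$ at some $u_\ast\in U$. Because $u_\ast\in U\subset V(\varepsilon)$ one has $\inf_{v\in V}\|u_\ast-v\|_w<\varepsilon$, whence $d_w(U,V)=\inf_{v\in V}\|u_\ast-v\|_w<\varepsilon$.

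Finally, item (iii) recombines these pieces. The hypothesis $U\subset V(\varepsilon)$ gives $d_w(U,V)<\varepsilon$ by the converse of (ii) just proved; the hypothesis $U\cap\mathfrak{B}_w(v,\varepsilon)\neq\emptyset$ for every $v\in V$ gives, for each $v$, a point $u\in U$ with $\|u-v\|_w<\varepsilon$ and hence $\inf_{u\in U}\|v-u\|_w<\varepsilon$ pointwise in $v$; applying the identical compactness argument to the weakly compact set $V$ and the $\varrho_w$-continuous map $v\mapsto\inf_{u\in U}\|v-u\|_w$ promotes this to $d_w(V,U)<\varepsilon$, and taking the maximum of the two strict bounds yields $\widehat{d}_w(V,U)<\varepsilon$. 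I expect the compactness step to be the sole real obstacle: the one subtlety is that strictness must survive the passage from pointwise infima to the supremum, which is exactly why the maximum has to be shown \emph{attained} rather than merely approached, forcing the reliance on compactness of $U$ and $V$ in $(B_\cH,\varrho_w)$ and on the Lipschitz continuity of the $\varrho_w$-distance-to-a-set function.
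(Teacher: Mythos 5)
Your proof is correct and follows essentially the same route as the paper's: the crux in both is the weak compactness of $U$ (resp.\ $V$) as a closed subset of the compact metric space $(B_\cH,\varrho_w)$, which makes the $\varrho_w$-continuous distance-to-a-set function attain its maximum and thereby preserves the strict inequality when passing to the supremum. The only (harmless) cosmetic differences are that you prove (i) directly rather than by contradiction, and in the forward implication of (ii) you use an approximate minimiser instead of the paper's compactness-supplied exact minimiser $v_u$.
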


 The remaining part of this Section is devoted to proving the above statements.
 
 \begin{proof}[Proof of Lemma \ref{lem:inclusion-and-triangular}]
  For \eqref{eq:dwinclusion}, the inclusion $U\subset V$ implies $\inf_{v\in V}\|u-v\|_w=0$ for every $u\in U$, whence $d_w(U,V)=0$; conversely, if $0=d_w(U,V)=\sup_{u\in U}\inf_{v\in V}\|u-v\|_w$, then $\inf_{v\in V}\|u-v\|_w=0$ for every $u\in U$, whence the fact, by weak closedness of $V$, 
  that any such $u$ belongs also to $V$. As for the property \eqref{eq:dhatzero}, it follows from \eqref{eq:dwinclusion} exploiting separately both inclusions $U\subset V$ and $U\supset V$. Last, let us prove the triangular inequalities \eqref{eq:d-triangular}-\eqref{eq:dhat-triangular}. Let $u_0\in U$: then, owing to the weak compactness of $V$ (as a closed subset of the compact metric space $(B_\cH,\varrho_w)$), $\inf_{v\in V}\|u_0-v\|_w=\|u_0-v_0\|_w$ for some $v_0\in V$, whence  
  \[
   \|u_0-v_0\|_w\;=\;\inf_{v\in V}\|u_0-v\|_w\;\leqslant\;  \sup_{u\in U}\inf_{v\in V}\|u-v\|_w \;=\;    d_w(U,V)\;\leqslant\;\widehat{d}_w(U,V)\,.
  \]
  As a consequence,
  \[
   \begin{split}
    \inf_{z\in Z}\|u_0-z\|_w\;&\leqslant\;\|u_0-v_0\|_w+ \inf_{z\in Z}\|v_0-z\|_w \\
    &\leqslant\; d(U,V)+d(U,Z) \\
    &\leqslant\;\widehat{d}_w(U,V)+\widehat{d}_w(V,Z)\,,
   \end{split}
  \]
  having used the triangular inequality of the $\|\cdot\|_w$-norm in the first inequality. By the arbitrariness of $u_0\in U$, thus taking the supremum over all such $u_0$'s,
  \[
   \begin{split}
       d_w(U,Z)\;&\leqslant\;d_w(U,V)+d_w(V,Z)\,, \\
       d_w(U,Z)\;&\leqslant\;\widehat{d}_w(U,V)+\widehat{d}_w(V,Z)\,.
   \end{split}
  \]
  With the first inequality above we proved \eqref{eq:d-triangular}. Next, let us combine the second inequality above with the corresponding bound for $d_w(V,U)$, which is established in a similar manner: let now $z_0\in Z$, and again by weak compactness there exists $v_0\in V$ with $\|v_0-z_0\|_w=\inf_{v\in V}\|v-z_0\|_w\leqslant d_w(Z,V)\leqslant\widehat{d}_w(V,Z)$, and also $\inf_{u\in U}\|u-v_0\|_w\leqslant d(V,U)=\widehat{d}(U,V)$, whence
  \[
   \inf_{u\in U}\|u-z_0\|_w\;\leqslant\;\inf_{u\in U}\|u-v_0\|_w+ \|v_0-z_0\|_w\;\leqslant\;\widehat{d}_w(U,V)+\widehat{d}_w(V,Z)\,.
  \]
   Taking the supremum over all $z_0\in Z$ yields
     \[
   d_w(Z,U)\;\leqslant\;\widehat{d}_w(U,V)+\widehat{d}_w(V,Z)\,.
  \]
   Combining the above estimates for $d_w(U,Z)$ and $d_w(Z,U)$ yields the conclusion.
 \end{proof}

  \begin{proof}[Proof of Theorem \ref{thm:weak-gap-properties-and-completeness-on-ball}(i)]
   It follows directly from \eqref{eq:dhatzero}-\eqref{eq:dhat-triangular} of Lemma \ref{lem:inclusion-and-triangular}.
  \end{proof}

  \begin{proof}[Proof of Lemma \ref{lem:weaklimits-closed}]
   Let $x\in\overline{\mathcal{U}}^{w}$, the closure of $\mathcal{U}$ in the weak topology, and let us construct a sequence $(u_n)_{n\in\mathbb{N}}$ with $u_n\in U_n$ and $u_n\rightharpoonup x$, thereby showing that $x\in\mathcal{U}$. 
   
   By assumption $\exists\, x_2\in\mathcal{U}$ with $\|x-x_2\|_w\leqslant\frac{1}{4}$ and $\|x_2-u_n^{(2)}\|_w\xrightarrow{n\to\infty} 0$ for a sequence $( u_n^{(2)})_{n\in\mathbb{N}}$ with $u_n^{(2)}\in U_n$. In particular, there is $N_2\in\mathbb{N}$ with $\|x_2-u_n^{(2)}\|_w\leqslant\frac{1}{4}$ $\forall n\geqslant N_2$. For the integer $N_3$ with $N_3>N_2+1$ to be fixed in a moment, set 
   \[
    u_n\;:=\; u_n^{(2)}\,,\qquad n\in\{N_2,\dots,N_3-1\}\,.
   \]
  By construction, $\|x-u_n\|_w\leqslant\|x-x_n\|_w+\|x_n-u_n\|_w\leqslant\frac{1}{2}$ $\forall n\in\{N_2,\dots,N_3-1\}$.

   Next, for each integer $k\geqslant 3$ one identifies recursively a sequence $(N_k)_{k=3}^\infty$ in $\mathbb{N}$ with $N_k>N_{k-1}+1$, and vectors $u_n\in U_n$ for $n\in\{N_k,\dots,N_{k+1}-1\}$ as follows.
    By assumption $\exists\, x_k\in\mathcal{U}$ with $\|x-x_k\|_w\leqslant\frac{1}{2k}$ and $\|x_k-u_n^{(k)}\|_w\xrightarrow{n\to\infty} 0$ for a sequence $( u_n^{(k)})_{n\in\mathbb{N}}$ with $u_n^{(k)}\in U_n$. In particular, it is always possible to find $N_k\in\mathbb{N}$ with $N_k>N_{k-1}+1$ such that   
    $\|x_k-u_n^{(k)}\|_w\leqslant\frac{1}{2k}$ $\forall n\geqslant N_k$. For the integer $N_{k+1}>N_k+1$ set 
   \[
    u_n\;:=\; u_n^{(k)}\,,\qquad n\in\{N_k,\dots,N_{k+1}-1\}\,.
   \]
  By construction, $\|x-u_n\|_w\leqslant\|x-x_n\|_w+\|x_n-u_n\|_w\leqslant\frac{1}{k}$ $\forall n\in\{N_k,\dots,N_{k+1}-1\}$.

   This yields a sequence $(u_n)_{n\in\mathbb{N}}$ (having added, if needed, finitely many irrelevant vectors $u_1,\dots,u_{N_2-1}$) with $u_n\in U_n$ and such that, for any integer $k\geqslant 2$,
   \[
    \|x-u_n\|_w\leqslant\frac{1}{k}\quad\forall\,n\geqslant N_k\,.
   \]
   Hence $\|x-u_n\|_w\xrightarrow{n\to\infty} 0$, thus $x\in\mathcal{U}$.   
  \end{proof}

   \begin{proof}[Proof of Theorem \ref{thm:weak-gap-properties-and-completeness-on-ball}(ii) and (iii)]
    One needs to show that given $(U_n)_{n\in\mathbb{N}}$, Cauchy sequence in $\mathcal{C}_w(\cH)$, there exists $U\in\mathcal{C}_w(\cH)$ with $\widehat{d}_w(U_n,U)\xrightarrow{n\to\infty}0$, and that $U$ has precisely the form \eqref{eq:limit-U}. Moreover, as $(\mathcal{C}_w(\cH),\widehat{d}_w)$ is a metric space, it suffices to establish the above statement for one subsequence of $(U_n)_{n\in\mathbb{N}}$.

    By the Cauchy property, $\widehat{d}_w(U_n,U_m)\xrightarrow{n,m\to\infty}0$. Up to extracting a subsequence, henceforth denoted again with $(U_n)_{n\in\mathbb{N}}$, one can further assume that 
    \[
     \widehat{d}_w(U_n,U_m)\;\leqslant\;\frac{1}{\:2^n}\quad \forall m\geqslant n\,.
    \]
    We shall establish the $\widehat{d}_w$-convergence of such (sub-)sequence.

    First of all, fixing any $n\in\mathbb{N}$ and any $u_{n}\in U_{n}$, we construct a sequence 
    with 
    \begin{itemize}
     \item an (irrelevant) choice of vectors $u_1,\dots,u_{n-1}$ in the first $n-1$ positions, such that $u_1\in U_1,\dots,u_{n-1}\in U_{n-1}$,
     \item precisely the considered vector $u_n$ in position $n$,
     \item and an infinite collection $u_{n+1},u_{n+2},u_{n+3},\dots$ determined recursively so that, given $u_k\in U_k$ ($k\geqslant n$), the next $u_{k+1}$ is that element of $U_{k+1}$ satisfying $\inf_{v\in U_{k+1}}\|u_k-v\|_w=\|u_k-u_{k+1}\|_w$ -- a choice that is always possible, owing to the weak compactness of $U_{k+1}$ as a closed subset of the compact metric space $(B_\cH,\varrho_w)$.
    \end{itemize}
    Let us refer to such $(u_k)_{k\in\mathbb{N}}$ as the sequence `originating from the given $u_n$' (tacitly understanding that it is one representative of infinitely many sequences with the same property, owing to the irrelevant choice of the first $n-1$ vectors). When the originating vector need be indicated, we shall write $\big(u^{(u_n)}_k\big)_{k\in\mathbb{N}}$\,: thus, $u^{(u_n)}_n\equiv u_n$.

    By construction, for any $k\geqslant n$, 
    \[
     \begin{split}
      \|u_k-u_{k+1}\|_w\;&=\;\inf_{v\in U_{k+1}}\|u_k-v\|_w\;\leqslant\;\sup_{z\in U_k}\inf_{v\in U_{k+1}}\|z-v\|_w \\
      &=\;d_w(U_{k},U_{k+1})\;\leqslant\;\widehat{d}_w(U_{k},U_{k+1})\;\leqslant\;\frac{1}{\:2^k}\,,
     \end{split}
    \]
    whence, for any $m> n$,
    \[
     \|u_n-u_m\|_w\;\leqslant\;\sum_{k=n}^{m-1}\|u_k-u_{k+1}\|_w\;\leqslant\;\sum_{k=n}^{m-1}\frac{1}{\:2^k}\;\leqslant\;\frac{1}{\:2^{n-1}}\,.
    \]
    This implies that the sequence $(u_k)_{k\in\mathbb{N}}$ originating from the considered $u_n\in U_n$ is a Cauchy sequence in $(B_\cH,\varrho_w)$ and we denote its weak limit as $u_\infty^{(u_n)}\in B_{\cH}$. The same construction can be repeated for any $n\in\mathbb{N}$ and starting the sequence from any $u_n\in U_n$: the collection of all possible limit points is
    \[
     U_\infty\;:=\;\big\{u\in B_\cH\,|\,u=u_\infty^{(u_n)}\;\textrm{ for some $n\in\mathbb{N}$ and some `starting' $u_n\in U_n$}\big\}\,.
    \]

    Compare now the set $U_\infty$ with the set 
    \[
     \widetilde{U}\;:=\;\{u\in B_\cH\,|\,u_n\rightharpoonup u\textrm{ for a sequence }(u_n)_{n\in\mathbb{N}} \textrm{ with } u_n\in U_n\}
    \]
    $\widetilde{U}$ is weakly closed (Lemma \ref{lem:weaklimits-closed}), and obviously $U_\infty\subset \widetilde{U}$. We claim that
    \[
     \widetilde{U}\;=\;\overline{U_\infty}^{\|\,\|_w}\qquad\textrm{(the weak closure of $U_\infty$)}\,.
    \]
    For arbitrary $u\in \widetilde{U}$ and $\varepsilon>0$ there is $n_\varepsilon\in\mathbb{N}$ and $u_{n_\varepsilon}\in U_{n_\varepsilon}$ with $\|u-u_{n_\varepsilon}\|_w\leqslant\varepsilon$. Non-restrictively, $n_\varepsilon\to\infty$ as $\varepsilon\downarrow 0$.  For a sequence $\big(u^{(u_{n_\varepsilon})}_k\big)_{k\in\mathbb{N}}$ originating from $u_{n_\varepsilon}$ and for its weak limit $u_\infty^{(u_{n_\varepsilon})}\in U_\infty$, there is $k_\varepsilon\in\mathbb{N}$ with $k_\varepsilon> n_\varepsilon$ satisfying both $\|u_{n_\varepsilon}-u^{(u_{n_\varepsilon})}_{k_\varepsilon}\|_w\leqslant 2^{-(n_\varepsilon-1)}$ (because of the above property of the sequences originating from one element) and $\|u^{(u_{n_\varepsilon})}_{k_\varepsilon}-u_\infty^{(u_{n_\varepsilon})}\|_w\leqslant\varepsilon$ (because of the convergence $u^{(u_{n_\varepsilon})}_k\rightharpoonup u_\infty^{(u_{n_\varepsilon})}$). Thus,
    \[
     \begin{split}
           \|u-u_\infty^{(u_{n_\varepsilon})}\|_w\;&\leqslant\;\|u-u_{n_\varepsilon}\|_w+\|u_{n_\varepsilon}-u^{(u_{n_\varepsilon})}_{k_\varepsilon}\|_w+\|u^{(u_{n_\varepsilon})}_{k_\varepsilon}-u_\infty^{(u_{n_\varepsilon})}\|_w \\
           &\leqslant\;2^{-(n_\varepsilon-1)}+2\varepsilon\,.
     \end{split}
    \]
    Taking $\varepsilon\downarrow 0$ shows that $u$ indeed belongs to the weak closure of $U_\infty$.

     
   It remains to prove that $\widehat{d}_w(U_{n},\widetilde{U})\xrightarrow{n\to\infty}0$.
   Let us control $d_w(U_{n},\widetilde{U})$ first.
   Pick $n\in\mathbb{N}$ and $u_n\in U_n$. For the sequence $(u_k)_{k\in\mathbb{N}}$ originating from $u_n$ (thus, $u_k\rightharpoonup u_\infty^{(u_n)}$) and for arbitrary $\varepsilon>0$ there is $k_\varepsilon\in\mathbb{N}$ with $ k_\varepsilon>n$ such that $\|u_\infty^{(u_n)}-u_{k_\varepsilon}\|_w\leqslant\varepsilon$ and $\|u_{k_\varepsilon}-u_n\|_w\leqslant 2^{-(n-1)}$. Thus,
   \[
    \inf_{u\in \widetilde{U}}\|u-u_n\|_w\;\leqslant\;\|u_\infty^{(u_n)}-u_n\|_w\;\leqslant\;\|u_\infty^{(u_n)}-u_{k_\varepsilon}\|_w+\|u_{k_\varepsilon}-u_n\|_w\;\leqslant\;\varepsilon+\frac{1}{\:2^{n-1}}\,,
   \]
   whence also
   \[
    d_w(U_n,\widetilde{U})\;=\;\sup_{u_n\in U_n}\inf_{u\in \widetilde{U}}\|u-u_n\|_w\leqslant\;\varepsilon+\frac{1}{\:2^{n-1}}\,.
   \]
   This implies that $\lim\sup_n d_w(U_n,\widetilde{U})\leqslant\varepsilon$, and owing to the arbitrariness of $\varepsilon$, finally $d_w(U_n,\widetilde{U})\xrightarrow{n\to\infty}0$. The other limit $d_w(\widetilde{U},U_n)\xrightarrow{n\to\infty}0$ is established in much the same way, exploiting additionally the density of $U_\infty$ in $\widetilde{U}$. Pick arbitrary $n\in\mathbb{N}$, $\varepsilon>0$, and $u\in \widetilde{U}$. We already argued, for the proof of the identity $\widetilde{U}=\overline{U_\infty}^{\|\,\|_w}$, that there is $n_\varepsilon\in \mathbb{N}$ with $n_\varepsilon\to\infty$ as $\varepsilon\downarrow 0$, and there is $u_{n_\varepsilon}\in U_{n_\varepsilon}$, 
   such that
   \[
    \big\|u-u_\infty^{(u_{n_\varepsilon})}\big\|_w\;\leqslant\;2^{-(n_\varepsilon-1)}+2\varepsilon\,.
   \]
   Non-restrictively, $n_\varepsilon>n$. In turn, as $u^{(u_{n_\varepsilon})}_k\rightharpoonup u_\infty^{(u_{n_\varepsilon})}$, there is $k_\varepsilon\in\mathbb{N}$ with $k_\varepsilon\geqslant n_\varepsilon>n$ such that $\big\|u_\infty^{(u_{n_\varepsilon})}-u_{k_\varepsilon}^{(n_\varepsilon)}\big\|_w\leqslant\varepsilon$ and $\big\|u_{k_\varepsilon}^{(n_\varepsilon)}-u_{n}^{(n_\varepsilon)}\big\|\leqslant 2^{-(n-1)}$. Therefore, 
  \[
   \begin{split}
    \inf_{v\in U_{n}}&\|u-w\|_w\;\leqslant\; \big\|u-u_\infty^{(u_{n_\varepsilon})}\big\|_w+\inf_{v\in U_{n}}\big\|u_\infty^{(u_{n_\varepsilon})}-v\big\|_w \\
    &\leqslant\; \big\|u-u_\infty^{(u_{n_\varepsilon})}\big\|_w+\big\|u_\infty^{(u_{n_\varepsilon})}-u_{k_\varepsilon}^{(n_\varepsilon)}\big\|_w+\big\|u_{k_\varepsilon}^{(n_\varepsilon)}-u_{n}^{(n_\varepsilon)}\big\| \\
   &\leqslant\;\frac{1}{\;2^{n_\varepsilon-1}}+3\varepsilon+\frac{1}{\;2^{n-1}}\,,
   \end{split}
  \]
  whence also
   \[
    d_w(U,U_n)\;=\;\sup_{u\in U}\inf_{u_n\in U_n}\|u-u_n\|_w\leqslant\;\frac{1}{\;2^{n_\varepsilon-1}}+3\varepsilon+\frac{1}{\;2^{n-1}}\,.
   \]
   As above, the limit $\varepsilon\downarrow 0$ and the arbitrariness of $n$ imply  $d_w(U,U_n)\xrightarrow{n\to\infty}0$ and finally $\widehat{d}_w(U_n,U)\xrightarrow{n\to\infty}0$.
   \end{proof}

   \begin{proof}[Proof of Lemma \ref{lem:38}] (i) If, for contradiction, $V\cap \mathfrak{B}_w(u_0,\varepsilon)=\emptyset$ for some $u_0\in U$, then the weak metric distance ($\varrho_w$) of $u_0$ from $V$ is at least $\varepsilon$, meaning that 
 \[
  d_w(U,V)\;\geqslant\;\inf_{v\in V}\|u_0-v\|_w\;\geqslant\;\varepsilon\,.
 \]
 
 (ii) Assume that $d_w(U,V)<\varepsilon$. On account of the weak compactness of $V$ (as a closed subset of the compact metric space $(B_\cH,\varrho_w)$), for any $u\in U$ there is $v_u\in V$ with
 \[
  \|u-v_u\|_w\;=\inf_{v\in V}\|u-v\|_w\;\leqslant\;d_w(U,V)\;<\;\varepsilon\,,
 \]
 meaning that $u\in \mathfrak{B}_w(v_u,\varepsilon)$. Thus, $U\subset V(\varepsilon)$. Conversely, if $U\subset V(\varepsilon)$, then any $u\in U$ belongs to a ball $\mathfrak{B}_w(v_u,\varepsilon)$ for some $v_u\in V$, whence
 \[
  f(u)\;:=\;\inf_{v\in V}\|u-v\|_w\;\leqslant\;\|u-v_u\|_w\;<\;\varepsilon\,.
 \]
  The function $f:U\to\mathbb{R}$ is continuous on the weak compact set $U$, hence it attains its maximum at a point $u=u_0$ and 
  \[
   d_w(U,V)\;=\;\sup_{u\in U}f(u)\;=\;\inf_{v\in V}\|u_0-v\|_w\;<\;\varepsilon\,.
  \]

 (iii) As by assumption $U\subset V(\varepsilon)$, we know from (ii) that $d_w(U,V)<\varepsilon$. In addition, for any $v\in V$ it is assumed that $\mathfrak{B}_w(v,\varepsilon)$ is not disjoint from $U$, meaning that there is $u_v\in U$ with $\|u_v-v\|_w<\varepsilon$. Therefore,
 \[
  g(v)\;:=\;\inf_{u\in U}\|u-v\|_w\;<\;\varepsilon,
 \]
  and from the continuity of $g:V\to\mathbb{R}$ on the weak compact $V$,
  \[
   d_w(V,U)\;=\;\sup_{v\in V}g(v)\;=\;g(v_0)\;<\;\varepsilon\,,
  \]
  where $v_0$ is some point of maximum for $g$. In conclusion, $\widehat{d}_w (V,U)<\varepsilon$.
 \end{proof}

 \begin{proof}[Proof of Theorem \ref{thm:weak-gap-properties-and-completeness-on-ball}(iv)]
  As the metric space $(\mathcal{C}_w(\cH),\widehat{d}_w)$ is complete, compactness follows if one proves that for any $\varepsilon>0$ the set $\mathcal{C}_w(\cH)$ can be covered by finitely many $\widehat{d}_w$-open balls of radius $\varepsilon$ (total boundedness and completeness indeed imply compactness for a metric space).	
  
  To this aim, let us observe first that, owing to the compactness of $(B_\cH,\varrho_w)$, for any $\varepsilon>0$ we may cover it with finitely many open balls $\mathfrak{B}_w(x_1,\varepsilon),\dots,\mathfrak{B}_w(x_{M},\varepsilon)$ for some $x_1,\dots x_M\in B_\cH$ and $M\in\mathbb{N}$ all depending on $\varepsilon$. Each $\mathfrak{B}_w(x_n,\varepsilon)$ is the $\varepsilon$-expansion of the weakly closed set $\{x_n\}$, hence
  \[
   Z(\varepsilon)\;=\;\bigcup_{x\in Z}\mathfrak{B}_w(x,\varepsilon) \qquad\forall\, Z\subset\mathcal{Z}_M\,:=\,\{x_1,\dots,x_M\}\,.
  \]

  Let us now show that the finitely many $\widehat{d}_w$-open balls of the form
  \[
   \{U\in\mathcal{C}_w(\cH)\,|\,\widehat{d}_w(U,Z)<\varepsilon\}\,,
  \]
centred at some $Z\subset\mathcal{Z}_M$, actually cover $\mathcal{C}_w(\cH)$. Pick $U\in\mathcal{C}_w(\cH)$: as $U\subset B_\cH$, $U$ intersects some of the balls $\mathfrak{B}_w(x_n,\varepsilon)$, so let $Z_U\subset\mathcal{Z}_M$ be the collection of the corresponding centres of such balls. Thus, $U\subset Z_U(\varepsilon)$ and $U\cap \mathfrak{B}_w(x,\varepsilon)\neq \emptyset$ for any $x\in Z_U$. The last two properties are precisely the assumption of Lemma \ref{lem:38}(iii), that then implies $\widehat{d}_w(U,Z_U)<\varepsilon$. In conclusion, each $U\in \mathcal{C}_w(\cH)$ belongs to the $\widehat{d}_w$-open ball centred at $Z_U$ and with radius $\varepsilon$, and irrespectively of $U$ the number of such balls is finite, thus realising a finite cover of $\mathcal{C}_w(\cH)$.  
 \end{proof}

 \begin{proof}[Proof of Theorem \ref{thm:weak-gap-properties-and-completeness-on-ball}(v)]
  Both $f(U_n)$ and $f(U)$ are weakly closed, hence also weakly compact subsets of $B_\cH$. In particular it makes sense to evaluate $\widehat{d}_w(f(U_n),f(U))$.
  
  We start with proving that $d_w(f(U_n),f(U))\xrightarrow{n\to\infty} 0$.
  Let $\varepsilon>0$. The weakly open $\varepsilon$-expansion $f(U)(\varepsilon)$ of $f(U)$ (see \eqref{eq:epsexp} above) is weakly open in $B_\cH$, namely open in the relative topology of $B_\cH$ induced by the weak topology of $\cH$. By weak continuity, $f^{-1}(f(U)(\varepsilon))$ too is weakly open in $B_\cH$, and in fact it is a relatively open neighbourhood of $U$, for $f(U)\subset f(U)(\varepsilon)$ $\Rightarrow$ $U\subset f^{-1}(f(U)(\varepsilon))$. The set $B_\cH\setminus f^{-1}(f(U)(\varepsilon))$ is therefore weakly closed and hence weakly compact in $B_\cH$, implying that from any point $u\in U$ one has a notion of weak metric distance between $u$ and $B_\cH\setminus f^{-1}(f(U)(\varepsilon))$. So set 
  \[
   \widetilde{\varepsilon}\;:=\;\inf_{u\in U}\inf\Big\{\|u-z\|_w\,\Big|\,z\in B_\cH\setminus f^{-1}(f(U)(\varepsilon))\Big\}\,.
  \]
  It must be $\widetilde{\varepsilon}>0$, otherwise there would be a common point in $U$ and $B_\cH\setminus f^{-1}(f(U)(\varepsilon))$ (owing to the weak closedness of the latter). Thus, any weakly open expansion of $U$ up to   
  $U(\widetilde{\varepsilon})$ is surely contained in $f^{-1}(f(U)(\varepsilon))$, whence also $f(U(\widetilde{\varepsilon}))\subset f(U)(\varepsilon)$. Now, as $U_n\xrightarrow{\widehat{d}_w}U$, there is $n_\varepsilon\in\mathbb{N}$ (in fact depending on $\widetilde{\varepsilon}$, and therefore on $\varepsilon$) such that $d_w(U_n,U)<\widetilde{\varepsilon}$ for all $n\geqslant n_\varepsilon$: then (Lemma \ref{lem:38}(ii)) $U_n\subset U(\widetilde{\varepsilon})$ for all $n\geqslant n_\varepsilon$. As a consequence, for all $n\geqslant n_\varepsilon$, $f(U_n)\subset f(U(\widetilde{\varepsilon}))\subset f(U)(\varepsilon)$. Using again Lemma \ref{lem:38}(ii), $d_w(f(U_n),f(U))<\varepsilon$ for all $n\geqslant n_\varepsilon$, meaning that $d_w(f(U_n),f(U))\xrightarrow{n\to\infty} 0$.
  
  Let us now turn to proving that $d_w(f(U),f(U_n))\xrightarrow{n\to\infty} 0$. Assume for contradiction that, up to passing to a subsequence, still denoted with $(U_n)_{n\in\mathbb{N}}$, there is $\varepsilon_0>0$ such that $d_w(f(U),f(U_n))\geqslant \varepsilon_0$ $\forall n\in\mathbb{N}$. With respect to such $\varepsilon_0$, as proved in the first part, 
  there is $n_{\varepsilon_0}\in\mathbb{N}$ such that $f(U_n)\subset f(U)(\varepsilon_0)$ $\forall n\geqslant n_{\varepsilon_0}$. For any such $n\geqslant n_{\varepsilon_0}$, on account of Lemma \ref{lem:38}(iii) one deduces from the latter two properties, namely $d_w(f(U),f(U_n))\geqslant \varepsilon_0$ and $f(U_n)\subset f(U)(\varepsilon_0)$, that there is $y_n\in f(U)$ such that $f(U_n)\cap\mathfrak{B}_w(y_n,\varepsilon_0)=\emptyset$, whence also
  \[
   U_n\cap f^{-1}(\mathfrak{B}_w(y_n,\varepsilon_0))\,=\,\emptyset\,.
  \]
  From this condition we want now to construct a sufficiently small weak open ball of a point $u\in U$ that is disjoint from all the $U_n$'s as well.  
  The sequence $(u^{(n)})_{n=n_{\varepsilon_0}}^\infty$ with each $u^{(n)}\in U$ such that $f(u^{(n)})=y_n$, owing to the weak compactness of $U$, has a weakly convergent subsequence to some $u\in U$. (The superscript in $u^{(n)}$ is to warn that each $u^{(n)}$ belongs to $U$, not to $U_n$.) So, up to further refinement, $u^{(n)}\rightharpoonup u$ in $U$, and by weak continuity $y_n=f(u^{(n)})\rightharpoonup f(u)=:y$. The latter convergence implies that, eventually in $n$, say, $\forall n\geqslant m_{\varepsilon_0}$ for some $m_{\varepsilon_0}\in\mathbb{N}$, $\mathfrak{B}_w(y,\frac{1}{2}\varepsilon_0)\subset\mathfrak{B}_w(y_n,\varepsilon_0)$. In view of the disjointness condition above, one then deduces
  \[
   U_n\cap f^{-1}(\mathfrak{B}_w(y,{\textstyle\frac{1}{2}}\varepsilon_0))\,=\,\emptyset\qquad \forall n\geqslant m_{\varepsilon_0}\,.
  \]
  As $f^{-1}(\mathfrak{B}_w(y,{\textstyle\frac{1}{2}}\varepsilon_0))$ above is an open neighbourhood of $u\in U$ in the relative weak topology of $B_\cH$ (weak continuity of $f$), it contains a ball $\mathfrak{B}_w(u,\varepsilon_1)$ around $u$ for some radius $\varepsilon_1>0$, whence
   \[
   U_n\cap \mathfrak{B}_w(u,\varepsilon_1)\,=\,\emptyset\qquad \forall n\geqslant m_{\varepsilon_0}\,.
  \]
  On account of Lemma \ref{lem:38}(i), this implies $d_w(U,U_n)\geqslant\varepsilon_1$ $\forall n\geqslant m_{\varepsilon_0}$. However, this contradicts the assumption $\widehat{d}_w(U,U_n)\to 0$.  
 \end{proof}

 \section{Weak gap metric for linear subspaces}
 
 Our primary interest is to exploit the $\widehat{d}_w$-convergence for closed subspaces of $\cH$, and ultimately for Krylov subspaces, in the sense of the convergence naturally induced by the convergence of the corresponding unit balls as elements of $(\mathcal{C}_w(\cH),\widehat{d}_w)$.

 In other words, given two closed subspaces $U,V\subset \cH$, by definition we identify 
  \begin{equation}\label{eq:metriclift}
  \widehat{d}_w(U,V)\;\equiv\;\widehat{d}_w(B_U,B_V)
 \end{equation}
 with the r.h.s.~defined in \eqref{eq:def-dw-in-ball}, since $B_U,B_V\in \mathcal{C}_w(\cH)$. Analogously, given 
 $U$ and a sequence $(U_n)_{n\in\mathbb{N}}$, all closed subspaces of $\cH$, we write $U_n\xrightarrow{\widehat{d}_w}U$ to mean that $B_{U_n}\xrightarrow{\widehat{d}_w}B_U$ in the sense of the definition given in the previous Section. This provides a metric topology and a notion of convergence on the set 
 \begin{equation}
  \mathscr{S}(\cH)\;:=\;\{ \textrm{closed linear subspaces of $\cH$} \}\,.
 \end{equation}
 By linearity, the closedness of each subspace of $\cH$ is equivalently meant in the $\cH$-norm or in the weak topology. (Recall, however, that the weak topology on $\cH$ is \emph{not} induced by the norm $\|\;\|_w$, as this is only the case in $B_\cH$.)

 \begin{lemma}
  The set $(\mathscr{S}(\cH),\widehat{d}_w)$ is a metric space.
 \end{lemma}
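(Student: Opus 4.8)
The plan is to reduce all four metric axioms for $\widehat{d}_w$ on $\mathscr{S}(\cH)$ to the already-established fact (Theorem \ref{thm:weak-gap-properties-and-completeness-on-ball}(i)) that $\widehat{d}_w$ is a genuine metric on $\mathcal{C}_w(\cH)$, exploiting the identification \eqref{eq:metriclift}. The first thing I would check is that this identification is legitimate, namely that $B_U\in\mathcal{C}_w(\cH)$ for every closed subspace $U\subset\cH$. This is immediate: $B_U$ is non-empty (it contains $0$), it is contained in $B_\cH$ by definition, and it is weakly closed, being the intersection of the closed convex---hence weakly closed---subspace $U$ with the weakly closed (indeed weakly compact) ball $B_\cH$. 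Thus $\widehat{d}_w(U,V):=\widehat{d}_w(B_U,B_V)$ is well defined on $\mathscr{S}(\cH)$.

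Once the identification is validated, non-negativity, symmetry, and the triangle inequality for $\widehat{d}_w$ on $\mathscr{S}(\cH)$ are inherited verbatim from the corresponding properties on $\mathcal{C}_w(\cH)$: non-negativity and symmetry are built into the definition \eqref{eq:def-dw-in-ball} of $\widehat{d}_w$ as a maximum of $d_w(B_U,B_V)$ and $d_w(B_V,B_U)$, while the triangle inequality is precisely \eqref{eq:dhat-triangular} of Lemma \ref{lem:inclusion-and-triangular} applied to the balls $B_U,B_V,B_Z$.

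The only axiom carrying genuine content is the identity of indiscernibles, and here the plan is to show that, restricted to unit balls of closed \emph{subspaces}, the vanishing of $\widehat{d}_w$ detects equality of the subspaces themselves. By \eqref{eq:dhatzero} one already knows $\widehat{d}_w(B_U,B_V)=0$ if and only if $B_U=B_V$ as subsets of $B_\cH$, so it suffices to prove that $B_U=B_V\iff U=V$ for closed subspaces $U,V$. The forward implication is the delicate direction and is where the linear (homogeneous) structure is essential: if $B_U=B_V$ and $u\in U$ with $u\neq 0$, then $u/\|u\|\in S_U\subset B_U=B_V\subset V$, whence $u\in V$ by the subspace property of $V$; together with $0\in V$ this yields $U\subset V$, and the symmetric argument gives $V\subset U$, so $U=V$. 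The reverse implication is trivial. I expect this rescaling step to be the only genuine obstacle, in the sense that it is exactly the point that distinguishes subspaces from arbitrary elements of $\mathcal{C}_w(\cH)$ and therefore cannot simply be imported from Theorem \ref{thm:weak-gap-properties-and-completeness-on-ball}; it is mild, but it is where the argument is not merely a transcription of the ball-level result.
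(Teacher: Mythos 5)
Your proposal is correct and follows essentially the same route as the paper: positivity, symmetry, and the triangle inequality are inherited from the ball-level metric via the identification \eqref{eq:metriclift} and Lemma \ref{lem:inclusion-and-triangular}, and the separation axiom is settled by the rescaling argument $u/\|u\|\in B_U=B_V$ combined with linearity. The only addition is your explicit check that $B_U\in\mathcal{C}_w(\cH)$, which the paper states without proof just before the lemma; this is a harmless (and welcome) bit of extra care.
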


 \begin{proof}
  Positivity and triangular inequality are obvious from \eqref{eq:metriclift} and Lemma \ref{lem:inclusion-and-triangular}. Last, to deduce from $\widehat{d}_w(U,V)=0$ that $U=V$, one observes that $\widehat{d}_w(B_U,B_V)=0$ and hence $B_U=B_V$. If $u\in U$, then $u/\|u\|\in B_U=B_V$, whence by linearity $u\in V$, thus, $U\subset V$. Exchanging the role of the two subspaces, also $V\subset U$.  
 \end{proof}

 The metric space $(\mathscr{S}(\cH),\widehat{d}_w)$ contains in particular the closures of Krylov subspaces, and monitoring the distance between two such subspaces in the $\widehat{d}_w$-metric turns out to be informative in many respects. Unfortunately there is a major drawback, for:

 \begin{lemma}
  The metric space $(\mathscr{S}(\cH),\widehat{d}_w)$ is not complete.
 \end{lemma}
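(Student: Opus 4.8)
The plan is to leverage the completeness of $(\mathcal{C}_w(\cH),\widehat{d}_w)$ from Theorem \ref{thm:weak-gap-properties-and-completeness-on-ball}(ii). Via the identification $U\mapsto B_U$ of \eqref{eq:metriclift}, a sequence in $\mathscr{S}(\cH)$ is Cauchy (resp.\ convergent) exactly when the corresponding sequence of unit balls is Cauchy (resp.\ convergent) in $\mathcal{C}_w(\cH)$. So I would exhibit a sequence $(U_n)_{n\in\mathbb{N}}$ of closed subspaces whose unit balls form a $\widehat{d}_w$-Cauchy sequence, but whose limit in the ambient complete space $\mathcal{C}_w(\cH)$ is a weakly closed subset of $B_\cH$ that is \emph{not} the unit ball of any closed subspace. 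Uniqueness of limits then prevents $(U_n)_n$ from converging inside $\mathscr{S}(\cH)$, which is precisely the failure of completeness.

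Concretely, fix an orthonormal basis $(e_k)_{k\in\mathbb{N}}$ and constants $r,s>0$ with $r^2+s^2=1$ and $r\in(0,1)$ (say $r=\tfrac12$), and set $U_n:=\mathrm{span}\{v_n\}$ with $v_n:=r\,e_1+s\,e_n$, for $n\geqslant 2$. Each $U_n$ is a finite-dimensional, hence closed, line with $B_{U_n}=\{c\,v_n\,|\,|c|\leqslant 1\}$. The engine of the whole argument is the fact that $\|e_n\|_w\to 0$, which holds because $e_n\rightharpoonup 0$ and $\|\cdot\|_w$ metrises the weak topology on the bounded set $B_\cH$. I claim $B_{U_n}\xrightarrow{\widehat{d}_w}U_\infty$ with $U_\infty:=\{\lambda e_1\,|\,|\lambda|\leqslant r\}$. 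To bound $d_w(B_{U_n},U_\infty)$ I would approximate a generic $c\,v_n$ by $c\,r\,e_1\in U_\infty$, obtaining $\|c\,v_n-c\,r\,e_1\|_w=|c|\,s\,\|e_n\|_w\leqslant\|e_n\|_w$; to bound $d_w(U_\infty,B_{U_n})$ I would approximate $\lambda e_1$ (with $|\lambda|\leqslant r$) by $(\lambda/r)v_n\in B_{U_n}$, obtaining weak distance $s\,\|e_n\|_w^{-1}\!\cdot\!|\lambda|/r\cdot\|e_n\|_w\leqslant s\,\|e_n\|_w$. Both one-sided distances are therefore dominated \emph{uniformly} by $\|e_n\|_w$, so $\widehat{d}_w(B_{U_n},U_\infty)\leqslant\|e_n\|_w\to 0$; in particular $(B_{U_n})_n$ is Cauchy. (The identification of $U_\infty$ also matches Theorem \ref{thm:weak-gap-properties-and-completeness-on-ball}(iii): since $v_n\rightharpoonup r\,e_1$, a sequence $c_n v_n\in B_{U_n}$ can converge weakly only to $c\,r\,e_1$ with $|c|\leqslant 1$.) Moreover $U_\infty$ is norm-compact, hence weakly closed, so $U_\infty\in\mathcal{C}_w(\cH)$.

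It remains to check that $U_\infty$ is not the unit ball of any closed subspace. If $U_\infty=B_W$ for some $W\in\mathscr{S}(\cH)$, then $r\,e_1\in U_\infty=B_W$ forces $e_1\in W$ by linearity, whence $e_1\in B_W=U_\infty$, contradicting $\|e_1\|=1>r$. Consequently no $W\in\mathscr{S}(\cH)$ satisfies $B_W=U_\infty$. Were $(U_n)_n$ to converge in $(\mathscr{S}(\cH),\widehat{d}_w)$ to some $W$, then $B_{U_n}\xrightarrow{\widehat{d}_w}B_W$, and uniqueness of limits in $\mathcal{C}_w(\cH)$ would give $B_W=U_\infty$, which is impossible. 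Thus $(U_n)_n$ is a $\widehat{d}_w$-Cauchy sequence in $\mathscr{S}(\cH)$ with no limit in $\mathscr{S}(\cH)$, and the space is not complete.

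The conceptual heart — and the only point requiring care — is the \emph{strict} loss of norm under weak limits: weak lower semicontinuity gives $\|r\,e_1\|=r<1=\liminf_n\|v_n\|$, so the weak-gap limit retains the radially shrunk ball $U_\infty$ rather than a genuine subspace ball. The main obstacle is thus not computational but structural: one must confirm that both directions of $d_w$ collapse uniformly to the single estimate $\|e_n\|_w\to 0$, and then verify that the resulting truncated ball genuinely escapes the image of $\mathscr{S}(\cH)$ in $\mathcal{C}_w(\cH)$. Everything else reduces to the elementary estimates above.
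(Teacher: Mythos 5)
Your proof is correct and uses essentially the same counterexample as the paper: the lines $U_n=\mathrm{span}\{e_1+e_n\}$ (you just normalise the spanning vector), driven by the same mechanism $\|e_n\|_w\to 0$ and the strict drop of norm under the weak limit $v_n\rightharpoonup r e_1$. The one genuine difference is in the endgame: the paper only extracts an abstract limit $B$ from completeness of $(\mathcal{C}_w(\cH),\widehat{d}_w)$ and must then argue via norm lower semicontinuity that $e_1\notin B$ while the segment $\{\beta e_1:|\beta|\leqslant 1/\sqrt{2}\}$ lies in $B$, whereas you identify the limit set $U_\infty=\{\lambda e_1:|\lambda|\leqslant r\}$ explicitly by a two-sided estimate, after which the fact that it is not the unit ball of a subspace is a one-line scaling argument --- a slightly cleaner finish at the cost of having to verify both one-sided distances. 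One typo: in your bound for $d_w(U_\infty,B_{U_n})$ the displayed quantity should read $(|\lambda|/r)\,s\,\|e_n\|_w\leqslant s\,\|e_n\|_w$; the extraneous factor $\|e_n\|_w^{-1}$ makes the written expression equal $s|\lambda|/r$, which is not $O(\|e_n\|_w)$, though the intended (and correct) estimate is clear.
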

 
 \begin{proof}
  It is enough to provide an example of $\widehat{d}_w$-Cauchy sequence in $\mathscr{S}(\cH)$ that does not converge in $\mathscr{S}(\cH)$. So take $\cH=\ell^2(\mathbb{N})$, with the usual canonical orthonormal basis $(e_n)_{n\in\mathbb{N}}$. For $n\in\mathbb{N}$, set $U_n:=\mathrm{span}\{e_1+e_n\}\subset\mathscr{S}(\cH)$.

  Let us show first of all that the sequence $(U_n)_{n\in\mathbb{N}}$ is $\widehat{d}_w$-Cauchy, i.e., that the corresponding unit balls form  a Cauchy sequence $(B_{U_n})_{n\in\mathbb{N}}$ in the metric space $(\mathcal{C}_w,\widehat{d}_w)$. A generic $u\in B_{U_n}$ has the form $u=\alpha_u(e_1+e_n)$ for some $\alpha_u\in\mathbb{C}$ with $|\alpha_u|\leqslant\frac{1}{\sqrt{2}}$. Therefore,
  \[
   \inf_{v\in B_{U_m}}\|u-v\|_w\;\leqslant\;\|\alpha_u(e_1+e_n)-\alpha_u(e_1+e_m)\|_w\;\leqslant\;\frac{1}{\sqrt{2}}\|e_n-e_m\|_w\,,
  \]
 the first inequality following from the concrete choice $v=\alpha_u(e_1+e_m)\in B_{U_m}$. Using the above estimate and the fact that $e_n\rightharpoonup0$, and hence $(e_n)_{n\in\cH}$ is Cauchy in $\cH$, one deduces
 \[
  d_w(B_{U_n},B_{U_m})\;=\;\sup_{u\in U_n}\inf_{v\in B_{U_m}}\|u-v\|_w\;\leqslant\;\frac{1}{\sqrt{2}}\|e_n-e_m\|_w\;\xrightarrow{n,m\to\infty}\;0\,.
 \]
  Inverting $n$ and $m$ one also finds $d_w(B_{U_m},B_{U_n})\xrightarrow{n,m\to\infty}0$. The Cauchy property is thus proved.

  On account of the completeness of $(\mathcal{C}_w,\widehat{d}_w)$ (Theorem \ref{thm:weak-gap-properties-and-completeness-on-ball}(ii)), $B_{U_n}\xrightarrow{\widehat{d}_w}B$ for some $B\in\mathcal{C}_w$. Next, let us show that there is no closed subspace $U\subset\cH$ with $B_U=B$, which prevents the sequence $(U_n)_{n\in\mathbb{N}}$ to converge in $(\mathscr{S}(\cH),\widehat{d}_w)$. To this aim, we shall show that although the line segment
  \[
   \{\beta e_1\,|\,\beta\in\mathbb{C}\,,\;|\beta|\leqslant{\textstyle\frac{1}{\sqrt{2}}}\}
  \]
  is entirely contained in $B$, however $e_1\notin B$: this clearly prevents $B$ to be the unit ball of a linear subspace.
  Assume for contradiction that $e_1\in B$; then, owing to Theorem \ref{thm:weak-gap-properties-and-completeness-on-ball}(iii) (see formula \eqref{eq:limit-U} therein), $e_1\leftharpoonup u_n$ for a sequence $(u_n)_{n\in\mathbb{N}}$ with $u_n\in B_{U_n}$. In fact, weak approximants from $B_\cH$ of points of the unit sphere $S_\cH$ are necessarily also norm approximants: explicitly, owing to weak convergence, the sequence $(u_n)_{n\in\mathbb{N}}$ is norm lower semi-continuous, thus,
  \[
   1\;=\;\|e_1\|\;\leqslant\;\liminf_{n\to\infty}\|u_n\|\;\leqslant\;1\,,\qquad\textrm{whence}\qquad \lim_{n\to\infty}\|u_n\|\;=\;1\,;
  \]
  then, since $u_n\rightharpoonup e_1$ and $\|u_n\|\to\|e_1\|$, one has $u_n\to x$ in the $\cH$-norm. As a consequence, writing $u_n=\alpha_n(e_1+e_n)$ for a suitable $\alpha_n\in\mathbb{C}$ with $|\alpha_n|\leqslant\frac{1}{\sqrt{2}}$, one has $|\alpha_n|\sqrt{2}=\|u_n\|\to 1$, whence $|\alpha_n|\to\frac{1}{\sqrt{2}}$. This implies though that 
  \[
   \|u_n-e_1\|^2\;=\;|\alpha_n(e_1+e_n)-e_1\|^2\;=\;|1-\alpha_n|^2+|\alpha_n|^2
  \]
  cannot vanish as $n\to\infty$, a contradiction. Therefore, $e_1\notin B$.

  On the other hand, for any $\beta\in\mathbb{C}$ with $|\beta|\leqslant{\textstyle\frac{1}{\sqrt{2}}}$, $B_{U_n}\ni\beta(e_1+e_n)\rightharpoonup\beta e_1$, which by Theorem \ref{thm:weak-gap-properties-and-completeness-on-ball}(iii) means that $\beta e_1\in B$.  
 \end{proof}

 Despite the lack of completeness, the metric $\widehat{d}_w$ in  $\mathscr{S}(\cH)$ displays useful properties for our purposes. The first is the counterpart of Theorem \ref{thm:weak-gap-properties-and-completeness-on-ball}(iii).

 \begin{proposition}
  Let $\cH$ be a separable Hilbert space and assume that $U_n\xrightarrow{\widehat{d}_w}U$ as $n\to\infty$ for some $(U_n)_{n\in\mathbb{N}}$ and $U$ in  $\mathscr{S}(\cH)$. Then
  \begin{equation}\label{eq:limit-U-subspace}
    U\;=\;\{u\in \cH\,|\,u_n\rightharpoonup u\textrm{ for a sequence }(u_n)_{n\in\mathbb{N}} \textrm{ with } u_n\in U_n\}\,.
   \end{equation}
 \end{proposition}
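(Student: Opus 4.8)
The plan is to reduce the whole statement to Theorem~\ref{thm:weak-gap-properties-and-completeness-on-ball}(iii), which already identifies the limit at the level of unit balls, and then to transfer that identification to the full subspaces by means of linearity (scaling), using only two elementary facts about weak convergence. By the very definition \eqref{eq:metriclift}, the hypothesis $U_n\xrightarrow{\widehat{d}_w}U$ means precisely $B_{U_n}\xrightarrow{\widehat{d}_w}B_U$ in $(\mathcal{C}_w(\cH),\widehat{d}_w)$. Since $B_U$ and all $B_{U_n}$ belong to $\mathcal{C}_w(\cH)$, Theorem~\ref{thm:weak-gap-properties-and-completeness-on-ball}(iii) applies and yields
\[
 B_U\;=\;\{w\in B_\cH\,|\,w_n\rightharpoonup w\textrm{ for a sequence }(w_n)_{n\in\mathbb{N}}\textrm{ with }w_n\in B_{U_n}\}\,.
\]
Denoting by $\mathcal{L}$ the set on the right-hand side of \eqref{eq:limit-U-subspace}, I would then establish the two inclusions $U\subseteq\mathcal{L}$ and $\mathcal{L}\subseteq U$ separately.

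For $U\subseteq\mathcal{L}$ I would scale \emph{up}. Given $u\in U$, the case $u=0$ is settled by the constant sequence $u_n:=0\in U_n$. If $u\neq 0$, then $u/\|u\|\in B_U$, so the displayed identity provides $w_n\in B_{U_n}$ with $w_n\rightharpoonup u/\|u\|$; the vectors $u_n:=\|u\|\,w_n$ lie in $U_n$ by linearity and satisfy $u_n\rightharpoonup u$, whence $u\in\mathcal{L}$.

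For $\mathcal{L}\subseteq U$ I would scale \emph{down}. Let $u\in\mathcal{L}$, realised by $u_n\in U_n$ with $u_n\rightharpoonup u$. The only genuine input is that a weakly convergent sequence is norm-bounded, so that $M:=\max\{1,\sup_n\|u_n\|\}<\infty$; then $u_n/M\in B_{U_n}$ and $u_n/M\rightharpoonup u/M$, while weak lower semicontinuity of the norm gives $\|u\|\leqslant\liminf_n\|u_n\|\leqslant M$, so that $u/M\in B_\cH$. Hence $u/M$ is a weak limit of elements of $B_{U_n}$, and the displayed identity forces $u/M\in B_U\subseteq U$; by linearity $u=M\,(u/M)\in U$.

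I do not expect a serious obstacle here: all the substance is carried by Theorem~\ref{thm:weak-gap-properties-and-completeness-on-ball}(iii), and the only care required is in the second inclusion, where the rescaling must be arranged so that the approximants remain inside the unit ball (boundedness of weakly convergent sequences) and the rescaled limit too stays in $B_\cH$ (lower semicontinuity of the norm). Both are standard, making the argument a short scaling step on top of the ball-level result.
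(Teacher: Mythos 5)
Your proposal is correct and follows essentially the same route as the paper: both reduce to Theorem \ref{thm:weak-gap-properties-and-completeness-on-ball}(iii) at the level of unit balls and then transfer the identity to the subspaces by scaling, using uniform boundedness of weakly convergent sequences and weak lower semicontinuity of the norm for the downward inclusion. Your treatment is, if anything, slightly more careful in spelling out the $u=0$ case and the bound $M\geqslant 1$ ensuring the rescaled limit stays in $B_\cH$.
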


 \begin{proof}
  Call temporarily
  \[
   \widehat{U}\;:=\;\{u\in \cH\,|\,u_n\rightharpoonup u\textrm{ for a sequence }(u_n)_{n\in\mathbb{N}} \textrm{ with } u_n\in U_n\}\,,
  \]
  so that the proof consists of showing that $\widehat{U}=U$.
  From Theorem \ref{thm:weak-gap-properties-and-completeness-on-ball}(iii) we know that
  \[
   B_{U_n}\,\xrightarrow{\widehat{d}_w}\,B_U\;=\;\{\widetilde{u}\in B_{\cH}\,|\,\widetilde{u}_n\rightharpoonup \widetilde{u}\textrm{ for a sequence }(\widetilde{u}_n)_{n\in\mathbb{N}} \textrm{ with } \widetilde{u}_n\in B_{U_n}\}\,.
  \]
  So now if $u\in U$, then $B_U\ni u/\|u\|\leftharpoonup \widetilde{u}_n$ for some $(\widetilde{u}_n)_{n\in\mathbb{N}}$ with $\widetilde{u}_n\in B_{U_n}$, whence $u\leftharpoonup \|u\|\widetilde{u}_n\in U_n$, meaning that $u\in\widehat{U}$. Conversely, if $u\in\widehat{U}$, and hence $u_n\rightharpoonup u$ for some $(u_n)_{n\in\mathbb{N}}$ with $u_n\in U_n$, then by uniform boundedness $\|u_n\|\leqslant \kappa$ $\forall n\in\mathbb{N}$ for some $\kappa>0$, and by lower semi-continuity of the norm along the limit $\|u\|\leqslant\kappa$ as well. As a consequence, $B_{U_n}\ni \kappa^{-1}u_n\rightharpoonup \kappa^{-1} u$, meaning that $\kappa^{-1} u\in B_U$ and therefore $u\in U$.  
 \end{proof}

 Other relevant features of the  $\widehat{d}_w$-metric will be worked out in the next Section in application to Krylov subspaces.

 \section{Krylov perturbations in the weak gap metric}\label{sec:Kry-perturb-and-wgmetric}

 We are mainly concerned with controlling how close two (closures of) Krylov subspaces $\mathcal{K}\equiv\overline{\mathcal{K}(A,g)}$ and $\mathcal{K}'\equiv\overline{\mathcal{K}(A',g')}$ are within the metric space $(\mathscr{S}(\cH),\widehat{d}_w)$ of closed subspaces of the separable Hilbert space $\cH$ with the weak gap metric $\widehat{d}_w$, for given $A,A'\in\mathcal{B}(\cH)$ and $g,g'\in\cH$.

 \subsection{Preliminary properties}~
 
 A first noticeable feature, that closes the problem left open as one initial motivation in Section \ref{sec:weakgapmetric}, is the $\widehat{d}_w$-convergence of the finite-dimensional Krylov subspace to the corresponding closed Krylov subspace.

 \begin{lemma}\label{lem:KriNtoKri}
  Let $A\in\mathcal{B}(\cH)$ and $g\in\cH$ for an infinite-dimensional, separable Hilbert space $\cH$. Then 
  \begin{equation}
   \widehat{d}_w\big( \mathcal{K}_N(A,g),\overline{\mathcal{K}(A,g)}\big)\;\xrightarrow{N\to\infty}\;0\,,
  \end{equation}
  with the two spaces defined, respectively, in \eqref{eq:defKrylov} and \eqref{eq:defKrylov-N}.
 \end{lemma}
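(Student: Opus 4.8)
The plan is to split the weak gap distance into its two one-sided pieces, since $\widehat{d}_w(\mathcal{K}_N,\mathcal{K})=\max\{d_w(B_{\mathcal{K}_N},B_{\mathcal{K}}),\,d_w(B_{\mathcal{K}},B_{\mathcal{K}_N})\}$ by the identification \eqref{eq:metriclift}, where I abbreviate $\mathcal{K}:=\overline{\mathcal{K}(A,g)}$ and $\mathcal{K}_N:=\mathcal{K}_N(A,g)$. The first piece is immediate: every generator $A^jg$ with $j\leqslant N-1$ lies in $\mathcal{K}$, so $\mathcal{K}_N\subset\mathcal{K}$ and thus $B_{\mathcal{K}_N}\subset B_{\mathcal{K}}$; the inclusion characterisation \eqref{eq:dwinclusion} of Lemma \ref{lem:inclusion-and-triangular} then gives $d_w(B_{\mathcal{K}_N},B_{\mathcal{K}})=0$ for every $N$. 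Everything therefore reduces to showing the reverse distance $d_w(B_{\mathcal{K}},B_{\mathcal{K}_N})\to 0$.

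For this I would argue by contradiction, leaning on the $\varrho_w$-compactness of $B_{\mathcal{K}}$ (a weakly closed subset of the weakly compact ball $B_\cH$, hence $\varrho_w$-compact). Suppose $d_w(B_{\mathcal{K}},B_{\mathcal{K}_N})\not\to 0$: then there exist $\varepsilon_0>0$ and a subsequence $N_k\uparrow\infty$ with $d_w(B_{\mathcal{K}},B_{\mathcal{K}_{N_k}})\geqslant\varepsilon_0$. For fixed $k$ the map $u\mapsto\inf_{v\in B_{\mathcal{K}_{N_k}}}\|u-v\|_w$ is $1$-Lipschitz for $\varrho_w$ (triangle inequality for $\|\cdot\|_w$), hence $\varrho_w$-continuous, so it attains its supremum on the compact set $B_{\mathcal{K}}$ at some $u_k\in B_{\mathcal{K}}$ with $\inf_{v\in B_{\mathcal{K}_{N_k}}}\|u_k-v\|_w\geqslant\varepsilon_0$. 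Compactness again lets me pass to a further subsequence along which $u_k\rightharpoonup u_\ast$ (equivalently $u_k\to u_\ast$ in $\varrho_w$) with $u_\ast\in B_{\mathcal{K}}$.

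The crux is then to contradict this lower bound by producing a good unit-ball approximant of $u_\ast$ out of a finite-order Krylov subspace. Since $u_\ast\in\mathcal{K}=\overline{\mathcal{K}(A,g)}$ in norm, there is $w\in\mathcal{K}(A,g)$, say $w\in\mathcal{K}_M$ for some finite $M$, with $\|u_\ast-w\|<\varepsilon_0/4$. The only subtlety is that $w$ may lie slightly outside $B_\cH$, which I would cure by radial projection: set $v_\ast:=w/\max\{1,\|w\|\}\in B_{\mathcal{K}_M}$, so that when $\|w\|>1$ the extra term is $\|w-v_\ast\|=\|w\|-1\leqslant\|u_\ast\|+\varepsilon_0/4-1\leqslant\varepsilon_0/4$, giving $\|u_\ast-v_\ast\|<\varepsilon_0/2$ in norm and hence $\|u_\ast-v_\ast\|_w<\varepsilon_0/2$ since $\|\cdot\|_w\leqslant\|\cdot\|$. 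For $k$ large we have both $N_k\geqslant M$, so that $v_\ast\in B_{\mathcal{K}_M}\subset B_{\mathcal{K}_{N_k}}$, and $\|u_k-u_\ast\|_w<\varepsilon_0/4$; therefore
\[
 \inf_{v\in B_{\mathcal{K}_{N_k}}}\|u_k-v\|_w\;\leqslant\;\|u_k-u_\ast\|_w+\|u_\ast-v_\ast\|_w\;<\;\tfrac{\varepsilon_0}{4}+\tfrac{\varepsilon_0}{2}\;<\;\varepsilon_0\,,
\]
contradicting $\inf_{v\in B_{\mathcal{K}_{N_k}}}\|u_k-v\|_w\geqslant\varepsilon_0$. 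This forces $d_w(B_{\mathcal{K}},B_{\mathcal{K}_N})\to 0$ and completes the lemma.

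I expect the main obstacle to be precisely the passage from the \emph{pointwise} norm-density of $\bigcup_N\mathcal{K}_N$ in $\mathcal{K}$ (where the order $M$ needed to approximate a vector depends on that vector) to a bound \emph{uniform} over all of $B_{\mathcal{K}}$: it is the $\varrho_w$-compactness of $B_{\mathcal{K}}$ that upgrades pointwise approximation to the one-sided uniform estimate, and the radial-projection step is the only place where the sphere-versus-ball weak subtlety emphasised in Section \ref{sec:weakgapmetric} must be handled with care.
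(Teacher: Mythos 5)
Your proof is correct. It keeps the paper's decomposition --- $d_w(B_{\mathcal{K}_N},B_{\mathcal{K}})=0$ by the inclusion $\mathcal{K}_N\subset\mathcal{K}$ and \eqref{eq:dwinclusion}, so the whole content is the one-sided distance $d_w(B_{\mathcal{K}},B_{\mathcal{K}_N})$ --- and the same underlying engine, namely $\varrho_w$-compactness of $B_{\mathcal{K}}$ combined with norm density of $\bigcup_N\mathcal{K}_N(A,g)$ in $\mathcal{K}$ and a rescaling to stay inside the unit ball. Where you genuinely differ is in how compactness is used. The paper argues directly via Lemma \ref{lem:approx-le-1}: it covers $B_{\mathcal{K}}$ by finitely many weakly open $\varepsilon$-balls whose centres each carry a polynomial approximant $p_j(A)g$ of norm strictly less than $1$, takes $N_0$ to be the largest degree among the finitely many $p_j$, and obtains the explicit uniform bound $d_w(B_{\mathcal{K}},B_{\mathcal{K}_N})\leqslant 2\varepsilon$ for all $N\geqslant N_0+1$; that same cover is then recycled in the proof of Lemma \ref{lem:gntogKnCauchy}. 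You instead argue by contradiction through sequential compactness, extracting weakly convergent near-maximisers $u_k\rightharpoonup u_\ast$ and approximating only the single limit point $u_\ast$. Both are sound: yours is slightly more self-contained (no need for the finite cover of Lemma \ref{lem:approx-le-1}(ii)) but non-quantitative, whereas the paper's cover argument yields a uniform estimate and a reusable construction. Your radial projection $w/\max\{1,\|w\|\}$, with the bound $\|w\|-1\leqslant\varepsilon_0/4$ valid because $\|u_\ast\|\leqslant 1$, plays exactly the role of the $\frac{n-1}{n}\frac{\|x\|}{\|\widetilde{u}_n\|}$ rescaling in Lemma \ref{lem:approx-le-1}(i), and your closing remark correctly identifies the compactness step as the point where pointwise density is upgraded to a uniform one-sided bound.
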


 This means that the $\widehat{d}_w$-metric provides the appropriate language to measure the distance between $\mathcal{K}_N(A,g)$ and $\overline{\mathcal{K}(A,g)}$ in an informative way: $\mathcal{K}_N(A,g)\xrightarrow{\widehat{d}_w}\overline{\mathcal{K}(A,g)}$, whereas we saw that it is false in general that $\mathcal{K}_N(A,g)\xrightarrow{\widehat{d}}\overline{\mathcal{K}(A,g)}$.

 For the simple proof of this fact, and for later purposes, it is convenient to work out the following useful construction.

  \begin{lemma}\label{lem:approx-le-1}
  Let $\cH$ be a separable Hilbert space, and let $A\in\mathcal{B}(\cH)$, $g\in\cH$. Set $\mathcal{K}:=\overline{\mathcal{K}(A,g)}$.
  \begin{itemize}
   \item[(i)] For every $x\in B_{\mathcal{K}}$ here exists a sequence $(u_n)_{n\in\mathbb{N}}$ in $\mathcal{K}(A,g)$ such that $\|u_n\|<1$ $\forall n\in\mathbb{N}$ and $u_n\xrightarrow[n\to\infty]{\|\,\|}x$.
   \item[(ii)] For every $\varepsilon>0$ there is a cover of $B_\mathcal{K}$ consisting of finitely many weakly open balls $\mathfrak{B}_w(x_1,\varepsilon),\dots,\mathfrak{B}_w(x_{M},\varepsilon)$ for some $x_1,\dots, x_M\in B_\mathcal{K}$ and $M\in\mathbb{N}$ all depending on $\varepsilon$. Moreover, each centre $x_j\in B_\mathcal{K}$ has an approximant $p_j(A)g$ for some polynomial $p_j$ on $\mathbb{R}$ with $\|p_j(A)g\|<1$ and $\|p_j(A)g-x_j\|\leqslant\varepsilon$ $\forall j\in\{1,\dots,M\}$.   
  \end{itemize}
   \end{lemma}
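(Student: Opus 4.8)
The plan is to establish (i) by a single normalisation trick and then obtain (ii) as a direct consequence of (i) together with the weak compactness of $B_\mathcal{K}$.

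First I would prove (i). Given $x\in B_\mathcal{K}=\mathcal{K}\cap B_\cH$, the very definition of the norm closure $\mathcal{K}=\overline{\mathcal{K}(A,g)}$ furnishes a sequence $(v_n)_{n\in\mathbb{N}}$ in $\mathcal{K}(A,g)$ with $v_n\to x$ in norm. The subtlety is only the \emph{strict} bound $\|u_n\|<1$, which is delicate exactly at boundary points $\|x\|=1$, where the $v_n$ may well satisfy $\|v_n\|\geqslant 1$. The device that settles all cases at once is to rescale,
\[
 u_n\;:=\;\frac{1-\frac1n}{\,\max\{1,\|v_n\|\}\,}\,v_n\;\in\;\mathcal{K}(A,g)\,,
\]
which is still a Krylov vector (a scalar multiple of $v_n$). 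Indeed $\|u_n\|=\frac{(1-\frac1n)\|v_n\|}{\max\{1,\|v_n\|\}}\leqslant 1-\frac1n<1$ unconditionally, while $\|v_n\|\to\|x\|\leqslant 1$ forces $\max\{1,\|v_n\|\}\to 1$, so that the scalar prefactor tends to $1$ and hence $u_n\to x$ in norm. This proves (i), the case $x=0$ included.

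For (ii) I would first produce the finite cover. Since $\mathcal{K}$ is a norm-closed (hence, being convex, weakly closed) subspace, $B_\mathcal{K}=\mathcal{K}\cap B_\cH$ is a weakly closed subset of the weakly compact ball $B_\cH$, thus weakly compact, i.e.\ compact in the metric space $(B_\cH,\varrho_w)$. Therefore the open cover $\{\mathfrak{B}_w(x,\varepsilon)\,|\,x\in B_\mathcal{K}\}$ of $B_\mathcal{K}$ admits a finite subcover $\mathfrak{B}_w(x_1,\varepsilon),\dots,\mathfrak{B}_w(x_M,\varepsilon)$ with all centres $x_1,\dots,x_M\in B_\mathcal{K}$, which is precisely the asserted cover.

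It then remains to exhibit the polynomial approximants, and here (i) does the work. Each centre $x_j$ lies in $B_\mathcal{K}$, so (i) applied to $x_j$ yields a norm-convergent sequence in $\mathcal{K}(A,g)$, of norms $<1$, with limit $x_j$; selecting a sufficiently advanced term produces a single vector $w_j\in\mathcal{K}(A,g)$ with $\|w_j\|<1$ and $\|w_j-x_j\|\leqslant\varepsilon$. As $\mathcal{K}(A,g)=\mathrm{span}\{A^kg\,|\,k\in\mathbb{N}_0\}=\{p(A)g\,|\,p\text{ a polynomial}\}$, one may write $w_j=p_j(A)g$, and the two estimates required of $p_j(A)g$ hold by construction. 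The one genuinely delicate step in the whole argument is the strict normalisation in (i) at unit-norm points; once the rescaling above is in place, the rest is a routine appeal to (i) and to the weak compactness of $B_\mathcal{K}$ recorded in Section \ref{sec:weakgapmetric}.
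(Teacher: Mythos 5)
Your proof is correct and follows essentially the same route as the paper's: part (i) by rescaling a norm-approximating sequence from $\mathcal{K}(A,g)$ so that the norms become strictly less than $1$ while the limit is preserved, and part (ii) by weak compactness of $B_\mathcal{K}$ inside $(B_\cH,\varrho_w)$ plus an application of (i) to the finitely many centres. Your normalisation factor $\frac{1-\frac{1}{n}}{\max\{1,\|v_n\|\}}$ differs in form from the paper's $\frac{n-1}{n}\frac{\|x\|}{\|\widetilde{u}_n\|}$ (and avoids having to assume $\|\widetilde{u}_n\|>0$), but this is an immaterial variation.
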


  \begin{proof}  
   As $x\in\overline{\mathcal{K}(A,g)}$, then $\widetilde{u}_n\xrightarrow[]{\|\,\|} x$ for some sequence $(\widetilde{u}_n)_{n\in\mathbb{N}}$ in $\mathcal{K}(A,g)$. In particular, $\|\widetilde{u}_n\|\to\|x\|$, and it is not restrictive to assume $\|\widetilde{u}_n\|>0$ for all $n$. Therefore,
   \[
    u_n\;:=\;\frac{n-1}{n}\,\frac{\|x\|}{\|\widetilde{u}_n\|}\,\widetilde{u}_n\;\in\;\mathcal{K}(A,g)\,,\qquad\textrm{and}\qquad \|u_n\|\;<\;\|\widetilde{u}_n\|\;\leqslant\;1\,,
   \]
  and obviously $u_n\xrightarrow[n\to\infty]{\|\,\|}x$. This proves part (i). Concerning part (ii), the existence of such cover follows from the compactness of $(B_\cH,\varrho_w)$ and the closure of $B_\mathcal{K}$ in $B_\cH$. The approximants $p_n(A)g$ are then found based on part (i).
    \end{proof}

  \begin{proof}[Proof of Lemma \ref{lem:KriNtoKri}]
   Let us use the shorthand $\mathcal{K}_N\equiv\mathcal{K}_N(A,g)$ and $\mathcal{K}\equiv\mathcal{K}(A,g)$.   
   As $\mathcal{K}_N\subset\mathcal{K}$, then $d_w(\mathcal{K}_N,\mathcal{K})=0$ (see \eqref{eq:dwinclusion}, Lemma \ref{lem:inclusion-and-triangular} above), so only the limit $d_w(\mathcal{K},\mathcal{K}_N)\equiv d_w(B_{\mathcal{K}},B_{\mathcal{K}_N})\to 0$ is to be checked.
   
   For $\varepsilon>0$ take the finite open $\varepsilon$-cover of $B_\mathcal{K}$ constructed in Lemma \ref{lem:approx-le-1} with centres $x_1,\dots,x_M$ and Krylov approximants $p_1(A)g,\dots,p_M(A)g$. Let $N_0$ be the largest degree of the $p_j$'s, thus ensuring that $p_j(A)g\in B_{\mathcal{K}_N}$ $\forall j\in\{1,\dots,M\}$ and $\forall N\geqslant N_0+1$.
   
   Now consider an arbitrary integer $N\geqslant N_0+1$ and an arbitrary $u\in B_\mathcal{K}$. The vector $u$ clearly belongs to at least one of the balls of the finite open cover above: up to re-naming the centres, it is non-restrictive to claim that $u\in\mathfrak{B}_w(x_1,\varepsilon)$, and consider the above approximant $p_1(A)g\in B_{\mathcal{K}_N}$ of the ball's centre $x_1$. Thus, $\|u-x_1\|_w<\varepsilon$ and $\|x_1-p_1(A)g\|_w\leqslant\|x_1-p_1(A)g\|\leqslant\varepsilon$. Then
   \[
    \begin{split}
     \inf_{v\in B_{\mathcal{K}_N}}\|u-v\|_w\;&\leqslant\;\|u-x_1\|_w+\|x_1-p_1(A)g\|_w+\inf_{v\in B_{\mathcal{K}_N}}\|p_1(A)g-v\|_w \\
     &<\;2\varepsilon
    \end{split}
   \]
  whence also $d_w(B_{\mathcal{K}},B_{\mathcal{K}_N})=\displaystyle\sup_{u\in B_{\mathcal{K}}}\inf_{v\in B_{\mathcal{K}_N}}\|u-v\|_w\leqslant 2\varepsilon$.
  \end{proof}

  Despite the encouraging property stated in Lemma \ref{lem:KriNtoKri}, one soon learns that the sequences of (closures of) Krylov subspaces with good convergence properties of the Krylov data $A$ and/or $g$ display in general quite a diverse (including non-convergent) behaviour in the $\widehat{d}_w$-metric. This suggests that an efficient control of $\widehat{d}_w$-convergence of Krylov subspaces is only possible under suitable restrictive assumptions.

  Lemma \ref{lem:gntogKnCauchy}, Example \ref{ex:ell2toK} and Example \ref{ex:gnbut-notCauchy} below are meant to shed some light on this scenario. In particular, Lemma \ref{lem:gntogKnCauchy} establishes that the convergence $g_n\to g$ in $\cH$ is sufficient to have $d_w(\overline{\mathcal{K}(A,g)},\overline{\mathcal{K}(A,g_n)})\to 0$.

  \begin{lemma}\label{lem:gntogKnCauchy}
   Given a separable Hilbert space $\cH$ and $A\in\mathcal{B}(\cH)$, assume that $g_n\xrightarrow[n\to\infty]{\|\,\|}g$ for vectors $g,g_n\in\cH$. Set $\mathcal{K}_n\equiv\overline{\mathcal{K}(A,g_n)}$ and $\mathcal{K}\equiv\overline{\mathcal{K}(A,g)}$.
   \begin{itemize}
    \item[(i)] One has $d_w(K,K_n)\xrightarrow{n\to\infty}0$.
    \item[(ii)] From a sequence $(B_{K_n})_{n\in\mathbb{N}}$ extract, by compactness of $(\mathcal{C}_w(\cH),\widehat{d}_w)$, a convergent subsequence to some $B\in \mathcal{C}_w(\cH)$. Then $B_K\subset B$.
   \end{itemize}
  \end{lemma}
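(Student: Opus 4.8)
The plan is to prove (i) by a finite-cover approximation argument exploiting the norm convergence $g_n\to g$, and then to derive (ii) as a short consequence via the triangle inequality for $d_w$. Since the roles of $\mathcal{K}$ and $\mathcal{K}_n$ are not symmetric here (neither subspace contains the other in general), I would estimate $d_w(K,K_n)=d_w(B_{\mathcal K},B_{\mathcal K_n})=\sup_{u\in B_{\mathcal K}}\inf_{v\in B_{\mathcal K_n}}\|u-v\|_w$ directly, rather than hoping for an inclusion.

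For part (i), fix $\varepsilon>0$ and invoke Lemma \ref{lem:approx-le-1}(ii) to cover $B_{\mathcal K}$ by finitely many weakly open balls $\mathfrak{B}_w(x_1,\varepsilon),\dots,\mathfrak{B}_w(x_M,\varepsilon)$, where each centre $x_j$ admits a Krylov approximant $p_j(A)g$ with $\|p_j(A)g\|<1$ and $\|p_j(A)g-x_j\|\leqslant\varepsilon$. The crucial point is that $g_n\to g$ in norm forces $p_j(A)g_n\to p_j(A)g$ in norm for each fixed polynomial $p_j$; since there are only finitely many indices $j$, one can choose a single threshold $n$ large enough that simultaneously $\|p_j(A)g_n\|<1$ (using the \emph{strict} bound $\|p_j(A)g\|<1$) and $\|p_j(A)g_n-p_j(A)g\|<\varepsilon$ for all $j\in\{1,\dots,M\}$. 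Each $p_j(A)g_n$ then genuinely lies in $B_{\mathcal K_n}$.

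Having secured these approximants inside $B_{\mathcal K_n}$, I would take an arbitrary $u\in B_{\mathcal K}$, locate a cover ball $\mathfrak{B}_w(x_j,\varepsilon)$ containing it, and chain the estimates through $x_j$ and $p_j(A)g$, using $\|\cdot\|_w\leqslant\|\cdot\|$ to pass from norm to weak-norm distances, so that $\inf_{v\in B_{\mathcal K_n}}\|u-v\|_w\leqslant\|u-x_j\|_w+\|x_j-p_j(A)g\|_w+\|p_j(A)g-p_j(A)g_n\|_w<3\varepsilon$. Taking the supremum over $u$ gives $d_w(B_{\mathcal K},B_{\mathcal K_n})\leqslant 3\varepsilon$ for all large $n$, and the arbitrariness of $\varepsilon$ yields $d_w(K,K_n)\xrightarrow{n\to\infty}0$. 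For part (ii), suppose $B_{K_{n_k}}\xrightarrow{\widehat{d}_w}B$ along the extracted subsequence. By the triangle inequality \eqref{eq:d-triangular} of Lemma \ref{lem:inclusion-and-triangular}, $d_w(B_K,B)\leqslant d_w(B_K,B_{K_{n_k}})+d_w(B_{K_{n_k}},B)$; the first term tends to $0$ by part (i) along the subsequence, while the second is bounded by $\widehat{d}_w(B_{K_{n_k}},B)\to 0$. Hence $d_w(B_K,B)=0$, which by the characterization \eqref{eq:dwinclusion} of Lemma \ref{lem:inclusion-and-triangular} is equivalent to $B_K\subset B$.

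The main obstacle, and the step deserving care, is ensuring that the perturbed approximants $p_j(A)g_n$ remain \emph{inside} the closed unit ball $B_{\mathcal K_n}$: this is precisely where the strict norm bound $\|p_j(A)g\|<1$ supplied by Lemma \ref{lem:approx-le-1} is indispensable, together with the fact that only finitely many polynomials $p_1,\dots,p_M$ are involved so that one threshold in $n$ works uniformly. Without the strict inequality, the norm perturbation incurred by replacing $g$ with $g_n$ could push an approximant of unit norm slightly outside the ball, and the bound on the infimum over $B_{\mathcal K_n}$ would break down. Note also that the argument only controls the one-sided distance $d_w(K,K_n)$, consistently with the fact that the reverse quantity $d_w(K_n,K)$ need not vanish.
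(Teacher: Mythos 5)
Your proof is correct and follows essentially the same route as the paper's: the finite weakly open $\varepsilon$-cover of $B_{\mathcal K}$ from Lemma \ref{lem:approx-le-1}(ii), the uniform threshold in $n$ exploiting the strict bounds $\|p_j(A)g\|<1$ so that the perturbed approximants $p_j(A)g_n$ land inside $B_{\mathcal K_n}$, the three-term chain giving $d_w(B_{\mathcal K},B_{\mathcal K_n})\leqslant 3\varepsilon$, and the triangle inequality plus \eqref{eq:dwinclusion} for part (ii). No gaps; your remark on why the strict inequality is indispensable matches the role it plays in the paper's argument.
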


  \begin{proof}~
  
  (i) For $\varepsilon>0$ take the finite open $\varepsilon$-cover of $B_\mathcal{K}$ constructed in Lemma \ref{lem:approx-le-1} with centres $x_1,\dots,x_M$ and Krylov approximants $p_1(A)g,\dots,p_M(A)g$. In view of the finitely many conditions $\|p_j(A)g\|<1$ and $p_j(A)g_n\xrightarrow[n\to\infty]{\|\,\|}p_j(A)g$, $j\in\{1,\dots,M\}$, there is $n_\varepsilon\in\mathbb{N}$ such that $\|p_j(A)g_n-p_j(A)g\|\leqslant\varepsilon$ and $\|p_j(A)g_n\|<1$  for all $n\geqslant n_\varepsilon$ and $j\in\{1,\dots,M\}$.

    Take $u\in B_{\mathcal{K}}$. Up to re-naming the centres of the cover's balls, $\|u-x_1\|_w<\varepsilon$, $\|x_1-p_1(A)g\|_w\leqslant\varepsilon$, $\|p_1(A)g_n-p_1(A)g\|\leqslant\varepsilon$, and $\|p_1(A)g_n\|<1$ $\forall n\geqslant n_\varepsilon$. Then, for any $n\geqslant n_\varepsilon$,
    \[
     \begin{split}
      \inf_{v\in B_{\mathcal{K}_n}}\|u-v\|_w\;&\leqslant\;\|u-x_1\|_w+\|x_1-p_1(A)g\|_w+\inf_{v\in B_{\mathcal{K}_n}}\|p_1(A)g-v\|_w \\
      &\leqslant\; 2\varepsilon+\|p_1(A)g-p_1(A)g_n\|_w \;\leqslant\;3\varepsilon\,,
     \end{split}
    \]
    whence also, for $n\geqslant n_\varepsilon$,
    \[
     d_w(\mathcal{K},\mathcal{K}_n)\;\equiv\;d_w(B_\mathcal{K},B_{\mathcal{K}_n})\;=\;\sup_{u\in B_\mathcal{K}}\inf_{v\in B_{\mathcal{K}_n}}\|u-v\|_w\;\leqslant\;3\varepsilon\,.
    \]
    This means precisely that $d_w(\mathcal{K},\mathcal{K}_n)\to 0$.

   (ii) Rename the extracted subsequence again as $(B_{K_n})_{n\in\mathbb{N}}$, so that $B_{K_n}\xrightarrow{\widehat{d}_w}B$. On account of \eqref{eq:d-triangular} (Lemma \ref{lem:inclusion-and-triangular}), 
   \[
    d_w(B_K,B)\;\leqslant\;d_w(B_K,B_{K_n})+d_w(B_{K_n},B)\,.
   \]
   Since $d_w(B_{K_n},B)\leqslant\widehat{d}_w(B_{K_n},B)\to 0$ by assumption, and $d_w(B_K,B_{K_n})\to 0$ as established in part (i), then $d_w(B_K,B)=0$. Owing to \eqref{eq:dwinclusion} (Lemma \ref{lem:inclusion-and-triangular}), this implies $B_K\subset B$.   
  \end{proof}

  \begin{example}\label{ex:ell2toK}
   In general, the assumptions of Lemma \ref{lem:gntogKnCauchy} are \emph{not enough} to guarantee that also $d_w(\mathcal{K}_n,\mathcal{K})\to 0$ and hence $\mathcal{K}_n\xrightarrow{\widehat{d}_x}\mathcal{K}$. Consider for instance $\cH=\ell^2(\mathbb{N})$ and the right-shift operator $A\equiv R$, acting as $Re_k=e_{k+1}$ on the canonical basis $(e_k)_{k\in\mathbb{N}}$. As in Example \ref{ex:gainloss-with-shift}, $R$ admits a dense of cyclic vectors, as well as a dense of non-cyclic vectors: so, with respect to the general setting of Lemma \ref{lem:gntogKnCauchy}, take now $g$ to be \emph{non-cyclic}, say, $g=e_2$, and $(g_n)_{n\in\mathbb{N}}$ to be a sequence of $\cH$-norm approximants of $g$ that are all \emph{cyclic}. Concerning the subspaces $\mathcal{K}_n:=\overline{\mathcal{K}(R,g_n)}$ and $\mathcal{K}:=\overline{\mathcal{K}(R,g)}$, $\mathcal{K}_n=\cH$ $\forall n\in\mathbb{N}$ by cyclicity, and $\mathcal{K}=\{e_1\}^\perp\varsubsetneq\cH$. As $\mathcal{K}\subset\mathcal{K}_n$, then $d_w(\mathcal{K},\mathcal{K}_n)= 0$, a conclusion consistent with Lemma \ref{lem:gntogKnCauchy}, for $(\mathcal{K}_n)_{n\in\mathbb{N}}$ is obviously $\widehat{d}_w$-Cauchy and Lemma \ref{lem:gntogKnCauchy} implies $d_w(\mathcal{K},\mathcal{K}_n)\to 0$. On the other hand,
   \[
    d_w(B_{\mathcal{K}_n},B_{\mathcal{K}})\;=\;\sup_{u\in B_{\mathcal{K}_n}}\inf_{v\in B_{\mathcal{K}}}\|u-v\|_w\;\geqslant\;\inf_{v\in B_{\mathcal{K}}}\|e_1-v\|_w\;>\;0\,,
   \]
   which prevents $d_w(\mathcal{K}_n,\mathcal{K})$ to vanish with $n$.
  \end{example}

  \begin{example}\label{ex:gnbut-notCauchy}
   In general, with respect to the setting of Lemma \ref{lem:gntogKnCauchy} and Example \ref{ex:ell2toK}, the sole convergence $g_n\xrightarrow[]{\|\,\|}g$ is \emph{not enough} to guarantee that $(\mathcal{K}_n)_{n\in\mathbb{N}}$ be $\widehat{d}_w$-Cauchy. For, again with the right-shift $R$ on $\cH=\ell^2(\mathbb{N})$, take now a sequence $(\widetilde{g}_n)_{n\in\mathbb{N}}$ of cyclic vectors for $R$ such that $\widetilde{g}_n\xrightarrow[]{\|\,\|}e_2$, and set 
   \[
    g_n\;:=\;
    \begin{cases}
     \:\widetilde{g}_n & \textrm{ for even $n$} \\
     \:e_2 & \textrm{ for odd $n$}\,.
    \end{cases}
   \]
   Thus, $g_n\xrightarrow[]{\|\,\|}g:=e_2$. For even $n$, $\mathcal{K}_n:=\overline{\mathcal{K}(R,g_n)}=\cH$ and $\mathcal{K}_{n+1}=\{e_1\}^\perp$, whence
   \[
     d_w(B_{\mathcal{K}_n},B_{\mathcal{K}_{n+1}})\;=\;\sup_{u\in B_{\mathcal{K}_n}}\inf_{v\in B_{\mathcal{K}_{n+1}}}\|u-v\|_w\;\geqslant\;\inf_{v\in B_{\mathcal{K}_{n+1}}}\|e_1-v\|_w\;>\;0\,,
   \]
   which prevents $(\mathcal{K}_m)_{m\in\mathbb{N}}$ to be $\widehat{d}_w$-Cauchy.
  \end{example}

 \subsection{Existence of $\widehat{d}_w$-limits. Krylov inner approximability.}\label{sec:innerapprox}~

 Based on the examples discussed above, one is to expect a variety sufficient conditions ensuring the convergence of a sequence of (closures of) Krylov subspaces to a (closure of) Krylov subspace. In this Subsection we discuss one mechanism of convergence that is meaningful in our context of Krylov perturbations.


 \begin{proposition}\label{prop:inner-approxim}
  Let $\cH$ be a separable Hilbert space, $A\in\mathcal{B}(\cH)$, and $g\in\cH$. Assume further that there is a sequence $(g_n)_{n\in\mathbb{N}}$ such that
  \begin{equation}\label{eq:inner-approx}
   g_n\in \overline{\mathcal{K}(A,g)}\;\;\;\forall n\in\mathbb{N}\qquad\textrm{and}\qquad g_n\xrightarrow[n\to\infty]{\|\,\|}g\,.
  \end{equation}
  Then $\overline{\mathcal{K}(A,g_n)}\;\xrightarrow{\widehat{d}_w}\;\overline{\mathcal{K}(A,g)}$.  
 \end{proposition}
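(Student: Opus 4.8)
The plan is to split the symmetric quantity $\widehat{d}_w(\overline{\mathcal{K}(A,g_n)},\overline{\mathcal{K}(A,g)})$ into its two one-sided pieces and to treat each separately, the point being that the two hypotheses in \eqref{eq:inner-approx} feed precisely one piece each. Write $\mathcal{K}:=\overline{\mathcal{K}(A,g)}$ and $\mathcal{K}_n:=\overline{\mathcal{K}(A,g_n)}$, so that by definition $\widehat{d}_w(\mathcal{K}_n,\mathcal{K})=\max\{d_w(\mathcal{K}_n,\mathcal{K}),\,d_w(\mathcal{K},\mathcal{K}_n)\}$, and it suffices to show that each of the two summands tends to $0$.

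For the piece $d_w(\mathcal{K},\mathcal{K}_n)$, the only input required is the norm convergence $g_n\xrightarrow{\|\,\|}g$, which is part of the hypothesis \eqref{eq:inner-approx}. Thus Lemma \ref{lem:gntogKnCauchy}(i) applies verbatim and yields $d_w(\mathcal{K},\mathcal{K}_n)\xrightarrow{n\to\infty}0$, with no use whatsoever of the inner-approximability condition.

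The piece $d_w(\mathcal{K}_n,\mathcal{K})$ is where the \emph{inner} condition $g_n\in\mathcal{K}$ enters, and the key structural observation is that it forces the nesting $\mathcal{K}_n\subseteq\mathcal{K}$. Indeed, $\mathcal{K}$ is a closed $A$-invariant subspace of $\cH$: since $A$ maps $\mathcal{K}(A,g)$ into itself, and $A$ is bounded (hence continuous), the invariance passes to the norm closure. Therefore from $g_n\in\mathcal{K}$ one gets $A^k g_n\in\mathcal{K}$ for every $k\in\mathbb{N}_0$, whence $\mathcal{K}(A,g_n)\subseteq\mathcal{K}$ and, taking norm closures, $\mathcal{K}_n\subseteq\mathcal{K}$. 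Consequently $B_{\mathcal{K}_n}\subseteq B_{\mathcal{K}}$, and by the characterisation \eqref{eq:dwinclusion} of Lemma \ref{lem:inclusion-and-triangular} one obtains $d_w(\mathcal{K}_n,\mathcal{K})\equiv d_w(B_{\mathcal{K}_n},B_{\mathcal{K}})=0$ for every $n$.

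Combining the two pieces gives $\widehat{d}_w(\mathcal{K}_n,\mathcal{K})=d_w(\mathcal{K},\mathcal{K}_n)\xrightarrow{n\to\infty}0$, which is the asserted convergence. In this sense the statement presents no genuine obstacle once the earlier machinery is available: the entire content reduces to the inclusion $\mathcal{K}_n\subseteq\mathcal{K}$ furnished by the $A$-invariance of $\mathcal{K}$, paired with the one-sided convergence already recorded in Lemma \ref{lem:gntogKnCauchy}(i). The only step demanding a line of care, and the one I would write out explicitly, is the $A$-invariance of the closed Krylov subspace, as this is precisely where the boundedness (continuity) of $A$ is used to pass the invariance to the closure.
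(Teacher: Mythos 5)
Your proposal is correct and follows essentially the same route as the paper's proof: the inclusion $\overline{\mathcal{K}(A,g_n)}\subset\overline{\mathcal{K}(A,g)}$ gives $d_w(\mathcal{K}_n,\mathcal{K})=0$ via \eqref{eq:dwinclusion}, and Lemma \ref{lem:gntogKnCauchy}(i) gives $d_w(\mathcal{K},\mathcal{K}_n)\to 0$ from $g_n\to g$. The only difference is that you spell out the $A$-invariance argument behind the inclusion, which the paper leaves implicit.
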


 \begin{proof}
 Let us use the shorthand $\mathcal{K}_n\equiv\overline{\mathcal{K}(A,g_n)}$, $\mathcal{K}\equiv\overline{\mathcal{K}(A,g)}$.  As $\mathcal{K}_n\subset\mathcal{K}$, then $d_w(\mathcal{K}_n,\mathcal{K})=0$. As $g_n\to g$ in $\cH$, then $d_w(\mathcal{K},\mathcal{K}_n)\to 0$ (Lemma \ref{lem:gntogKnCauchy}). Thus, $\mathcal{K}_n\xrightarrow{\widehat{d}_w}\mathcal{K}$.   
%
%
%
%
\end{proof}

 The above convergence $\overline{\mathcal{K}(A,g_n)}\xrightarrow{\widehat{d}_w}\overline{\mathcal{K}(A,g)}$, in view of condition \eqref{eq:inner-approx},  expresses the ``inner approximability'' of $\overline{\mathcal{K}(A,g)}$. In fact, $\overline{\mathcal{K}(A,g_n)}\subset\overline{\mathcal{K}(A,g)}$. Condition \eqref{eq:inner-approx} includes also the case of approximants $g_n$ from $\mathcal{K}(A,g)$ or also from $\mathcal{K}_n(A,g)$ (the $n$-th order Krylov subspace \eqref{eq:defKrylov-N}). For instance, set
 \[
   g_n\;:=\;\sum_{k=0}^{n-1} \frac{1}{\:n^{2k}\|A\|_{\mathrm{op}}^k} A^k g\;\in\;\mathcal{K}_n(A,g)\,,\qquad n\in\mathbb{N}\,,
  \]
 and as 
 \[
  \|g-g_n\|\;\leqslant\;\sum_{k=1}^{n-1} \frac{\|A^k g\|}{\:n^{2k}\|A\|_{\mathrm{op}}^k}\;\leqslant\;\|g\|\sum_{k=1}^{n-1} \frac{1}{\:n^{2k}}\;\leqslant\;\frac{\|g\|}{n}\,,
 \]
  then $\overline{\mathcal{K}(A,g)}\supset\mathcal{K}(A,g)\supset \mathcal{K}_n(A,g)\ni g_n\to g$ in $\cH$.
 
%
%

\subsection{Krylov solvability along $\widehat{d}_w$-limits}~

Let us finally scratch the surface of a very central question for the present investigation, namely how a perturbation of a given inverse linear problem, that is small in $\widehat{d}_w$-sense for the corresponding Krylov subspaces, does affect the Krylov solvability.

Far from answering in general, we have at least the tools to control the following class of cases. The proof is fast, but it relies on two non-trivial toolboxes.

\begin{proposition}\label{prop:Krisolv-along-onelimit}
 Let $\cH$ be a separable Hilbert space. The following be given:
 \begin{itemize}
  \item an operator $A\in\mathcal{B}(\cH)$ with inverse $A^{-1}\in \mathcal{B}(\cH)$;
  \item a sequence $(g_n)_{n\in\mathbb{N}}$ in $\cH$ such that for each $n$ the (unique) solution $f_n:=A^{-1}g_n$ to the inverse problem $Af_n=g_n$ is a Krylov solution;
  \item a vector $g\in\cH$ such that $\overline{\mathcal{K}(A,g_n)}\xrightarrow{\widehat{d}_w}\overline{\mathcal{K}(A,g)}$ as $n\to\infty$.
 \end{itemize}
  Then the (unique) solution $f:=A^{-1}g$ to the inverse problem $Af=g$ is a Krylov solution. If in addition $g_n\to g$, respectively $g_n\rightharpoonup g$, then $f_n\to f$, respectively $f_n\rightharpoonup f$.
\end{proposition}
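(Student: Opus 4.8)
The plan is to combine the limit characterization \eqref{eq:limit-U-subspace} of $\widehat{d}_w$-convergent subspaces with the (weak) continuity of the bounded inverse $A^{-1}$. Throughout I would abbreviate $\mathcal{K}_n:=\overline{\mathcal{K}(A,g_n)}$ and $\mathcal{K}:=\overline{\mathcal{K}(A,g)}$.

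The first and crucial step is to extract the structural content of the per-level Krylov solvability. Each $\mathcal{K}_n$ is trivially $A$-invariant, since $A$ maps the generators $A^kg_n$ to $A^{k+1}g_n$. The point is that the hypothesis $f_n=A^{-1}g_n\in\mathcal{K}_n$ \emph{upgrades} this to $A^{-1}$-invariance: indeed $A^{-1}$ sends $A^kg_n\mapsto A^{k-1}g_n\in\mathcal{K}(A,g_n)$ for $k\geqslant 1$, and sends the single remaining generator $g_n\mapsto f_n\in\mathcal{K}_n$; by linearity $A^{-1}(\mathcal{K}(A,g_n))\subseteq\mathcal{K}_n$, and then, using the continuity of $A^{-1}$ and the closedness of $\mathcal{K}_n$, one concludes $A^{-1}(\mathcal{K}_n)\subseteq\mathcal{K}_n$ for every $n$. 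I expect this to be the genuine obstacle of the proof — it is exactly here that the level-$n$ solvability assumption is consumed, and everything downstream is soft.

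With invariance in hand I would transfer Krylov solvability to the limit. Since $g\in\mathcal{K}$ and $\mathcal{K}_n\xrightarrow{\widehat{d}_w}\mathcal{K}$, the characterization \eqref{eq:limit-U-subspace} provides a sequence $w_n\in\mathcal{K}_n$ with $w_n\rightharpoonup g$. The $A^{-1}$-invariance gives $A^{-1}w_n\in\mathcal{K}_n$, and the weak continuity of the bounded operator $A^{-1}$ gives $A^{-1}w_n\rightharpoonup A^{-1}g=f$. Feeding this weakly convergent sequence back into \eqref{eq:limit-U-subspace} yields $f\in\mathcal{K}$, i.e.\ $f$ is a Krylov solution. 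I would emphasise that this first conclusion requires \emph{no} convergence hypothesis on the data $(g_n)$ themselves, only the subspace convergence, precisely because the Krylov subspaces are insensitive to the magnitudes of the $g_n$.

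The remaining convergence statements are then immediate and I would dispatch them in one line each from $A^{-1}\in\mathcal{B}(\cH)$: norm continuity gives $g_n\to g\Rightarrow f_n=A^{-1}g_n\to A^{-1}g=f$, while the weak continuity of bounded operators gives $g_n\rightharpoonup g\Rightarrow f_n\rightharpoonup f$. In summary, the two non-trivial ingredients are the weak-gap limit characterization \eqref{eq:limit-U-subspace} (resting on Theorem \ref{thm:weak-gap-properties-and-completeness-on-ball}) and the invariance reformulation of Krylov solvability for invertible $A$; the computations themselves are short.
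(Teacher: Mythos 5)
Your proof is correct, and it follows a genuinely different route from the paper's. The paper rescales $A$ so that $\|A\|_{\mathrm{op}}\leqslant 1$, invokes Theorem \ref{thm:weak-gap-properties-and-completeness-on-ball}(v) to push the convergence $\overline{\mathcal{K}(A,g_n)}\xrightarrow{\widehat{d}_w}\overline{\mathcal{K}(A,g)}$ through the weakly continuous map $A$, quotes the external equivalence \cite[Prop.~3.2(ii)]{CMN-2018_Krylov-solvability-bdd} ($f_n\in\overline{\mathcal{K}(A,g_n)}$ iff $A\overline{\mathcal{K}(A,g_n)}=\overline{\mathcal{K}(A,g_n)}$) to identify $A\overline{\mathcal{K}(A,g_n)}$ with $\overline{\mathcal{K}(A,g_n)}$, and concludes $A\overline{\mathcal{K}(A,g)}=\overline{\mathcal{K}(A,g)}$ by uniqueness of metric limits, whence $f\in\overline{\mathcal{K}(A,g)}$ by the same cited equivalence. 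You instead work with $A^{-1}$: you re-derive inline the one direction of that equivalence you actually need (the hypothesis $f_n\in\mathcal{K}_n$ upgrades the trivial $A$-invariance to $A^{-1}(\mathcal{K}_n)\subseteq\mathcal{K}_n$, via linearity on the generators plus norm continuity and closedness), and then you use only the sequential characterization \eqref{eq:limit-U-subspace} of the $\widehat{d}_w$-limit: a sequence $w_n\in\mathcal{K}_n$ with $w_n\rightharpoonup g$ exists, $A^{-1}w_n\in\mathcal{K}_n$ converges weakly to $f$, and the characterization returns $f\in\mathcal{K}$. What your approach buys is self-containment and economy: no rescaling, no appeal to part (v) of the theorem (only to part (iii) as lifted to subspaces), and no external citation; it also makes transparent exactly where the level-$n$ solvability is consumed. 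What the paper's approach buys is a cleaner structural statement en route ($A\overline{\mathcal{K}(A,g)}=\overline{\mathcal{K}(A,g)}$, i.e.\ the invariance reformulation of Krylov solvability survives the $\widehat{d}_w$-limit), which is of independent interest and showcases the functorial behaviour of $\widehat{d}_w$-limits under weakly continuous maps. Both arguments are sound; the final convergence claims are handled identically.
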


\begin{proof}
 As $A$ is a bounded bijection of $\cH$ with bounded inverse, $A$ is a strongly continuous and closed $\cH\to\cH$ (linear) map, and therefore it also weakly continuous and weakly closed. Up to a non-restrictive scaling one may assume that $\|A\|_{\mathrm{op}}\leqslant 1$, implying that $A$ maps $B_\cH$ into itself. The conditions of Theorem \ref{thm:weak-gap-properties-and-completeness-on-ball}(v) are therefore matched. Thus, from $\overline{\mathcal{K}(A,g_n)}\xrightarrow{\widehat{d}_w}\overline{\mathcal{K}(A,g)}$ one deduces $A\overline{\mathcal{K}(A,g_n)}\xrightarrow{\widehat{d}_w}A\overline{\mathcal{K}(A,g)}$. On the other hand, based on a result that we proved in \cite[Prop.~3.2(ii)]{CMN-2018_Krylov-solvability-bdd}, the assumption that $f_n\in\overline{\mathcal{K}(A,g_n)}$ is equivalent to $A\overline{\mathcal{K}(A,g_n)}=\overline{\mathcal{K}(A,g_n)}$. Thus, 
 $\overline{\mathcal{K}(A,g_n)}=A\overline{\mathcal{K}(A,g_n)}\xrightarrow{\widehat{d}_w}A\overline{\mathcal{K}(A,g)}$. The $\widehat{d}_w$-limit being unique, $A\overline{\mathcal{K}(A,g)}=\overline{\mathcal{K}(A,g)}$. Then, again on account of \cite[Prop.~3.2(ii)]{CMN-2018_Krylov-solvability-bdd}, $f\in\overline{\mathcal{K}(A,g_n)}$. This proves the main statement; the additional convergences of $(f_n)_{n\in\mathbb{N}}$ to $f$ are obvious. 
\end{proof}

\begin{remark}
 It is worth stressing that the the control of the perturbation in Proposition \ref{prop:Krisolv-along-onelimit}, namely the assumption $\overline{\mathcal{K}(A,g_n)}\xrightarrow{\widehat{d}_w}\overline{\mathcal{K}(A,g)}$, does not necessarily correspond to some $\cH$-norm vicinity between $g_n$ and $g$ (in Proposition \ref{prop:inner-approxim}, instead, we had discussed a case where $\overline{\mathcal{K}(A,g_n)}\xrightarrow{\widehat{d}_w}\overline{\mathcal{K}(A,g)}$ is a consequence of $g_n\to g$ in $\cH$). The following example elucidates the situation. With respect to the general setting of Proposition \ref{prop:Krisolv-along-onelimit}, consider $\cH=\ell^2(\mathbb{N})$, $A=\mathbbm{1}$, $g=0$, $g_n=e_n$ (the $n$-th canonical basis vector), and hence
 \[
  \begin{split}
   K_n\;&:=\;\overline{\mathcal{K}(A,g_n)}\;=\;\mathrm{span}\{e_n\}\,, \\
   K\;&:=\;\overline{\mathcal{K}(A,g)}\;=\;\{0\}\,.
  \end{split}
 \]
 Obviously $B_K\subset B_{K_n}$, whence $d_w(K,K_n)=0$, on account of \eqref{eq:dwinclusion} and \eqref{eq:metriclift}. On the other hand, a generic $u\in B_{K_n}$ has the form $u=\alpha e_n$ for some $|\alpha| \leqslant 1$. Therefore,
 \[
   d_w(K_n,K)\;=\;\sup_{u\in B_{K_n}}\inf_{v\in B_K} \|u - v\|_w \;=\;\sup_{u\in B_{K_n}}\|u\|_w\;\leqslant\;\|e_n\|_w \xrightarrow{\;n\to\infty\;}\;0\,.
 \]
 This shows that $\overline{\mathcal{K}(A,g_n)}\xrightarrow{\widehat{d}_w}\overline{\mathcal{K}(A,g)}$. Thus, all assumptions of Proposition \ref{prop:Krisolv-along-onelimit} are matched. However, it is false that $g_n$ converges to $g$ in norm: in this case it is only true that $g_n\rightharpoonup g$ (weakly in $\cH$), indeed $e_n\rightharpoonup 0$. 
\end{remark}

\section{Conclusions and perspectives}\label{sec:conclusions}

In retrospect, a few concluding observations are in order.

We have already elaborated in the opening Section \ref{intro} that the main perspective of this kind of investigation is to regard a perturbed inverse problem as a potentially ``easier'' source of information, including Krylov solvability, for the original, unperturbed problem, and conversely to understand when a given inverse problem looses Krylov solvability under small perturbations, that in practice would correspond to uncertainties of various sort, thus making Krylov subspace methods potentially unstable.

The evidence from Section \ref{sec:gain-loss} is that a controlled vicinity of the perturbed operator or the perturbed datum is not sufficient, alone, to decide on the above questions, for Krylov solvability may well persist, disappear, or appear in the limit when the perturbation is removed. And the idea inspiring Section \ref{sec:K-class} is that constraining the perturbation within certain classes of operators may provide the additional information needed. Thus, a first plausible research programme is to investigate what classes of operators undergo perturbations that make Krylov solvability stable.

The attempt we then made in Sections \ref{sec:weakgapmetric}-\ref{sec:Kry-perturb-and-wgmetric} is to encode the inverse problem perturbation into a convenient topology that allows to predict whether Krylov solvability persists or is washed out. On a conceptual footing this \emph{is} the appropriate approach, because we know from our previous investigation \cite{CMN-2018_Krylov-solvability-bdd} that Krylov solvability is essentially a structural property of the Krylov subspace $\mathcal{K}(A,g)$, therefore it is natural to compare Krylov subspaces in a meaningful sense. The weak gap metric for linear subspaces of $\cH$, while being encouraging in many respects ($\mathcal{K}_N\to\mathcal{K}$, inner approximability, stability under perturbations in the sense of Proposition \ref{prop:Krisolv-along-onelimit}), suffers various limitations that need be further understood (indirectly due to the lack of completeness of the $\widehat{d}_w$-metric out of the Hilbert closed unit ball, in turn due to the lack of metrisability of the weak topology out of the unit ball). It is plausible to expect, and so is our next commitment, that the informative control of the inverse problem perturbation, as far as Krylov solvability is concerned, is a combination of an efficient distance between Krylov subspaces, vicinity of operators and of data, and restriction to classes of distinguished operators.

At this stage, this preliminary investigation completes a \emph{first} cycle of study on abstract inverse linear problems, their finite-dimensional truncations and approximations, their Krylov solvability in the bounded and unbounded case, and the stability of Krylov solvability under perturbations, that we developed in our previous recent works \cite{CMN-2018_Krylov-solvability-bdd,CMN-truncation-2018,CM-Nemi-unbdd-2019,CM-2019_ubddKrylov} and in the present one.

%
%
%

\def\cprime{$'$}

\end{document}